\newenvironment{thm}{\subsection{}{\textbf {Theorem.}}\em}{}
\newenvironment{cor}{\subsection{}{\textbf {Corollary.}}\em}{}
\newenvironment{lem}{\subsection{}{\textbf {Lemma.}}\em}{}
\newcommand\fA{\ensuremath{\mathfrak A}}
\newcommand\cA{\ensuremath{\mathcal A}}
\newcommand\cB{\ensuremath{\mathcal B}}
\newcommand\cH{\ensuremath{\mathcal H}}
\newcommand\cJ{\ensuremath{\mathcal J}}
\newcommand\cK{\ensuremath{\mathcal K}}
\newcommand\cL{\ensuremath{\mathcal L}}
\newcommand\cX{\ensuremath{\mathcal X}}
\newcommand\bbC{\ensuremath{\mathbb C}}
\newcommand\bbM{\ensuremath{\mathbb M}}
\newcommand\bbN{\ensuremath{\mathbb N}}
\newcommand\hilb{\ensuremath{\mathcal H}}
\newcommand\ol{\ensuremath{\overline}}
\newcommand\eop{{{\hfil \ensuremath \Box}}}
\newcommand\norm{\ensuremath {\Vert}}
\renewcommand{\phi}{\varphi}
\renewcommand{\rho}{\varrho}
\begin{document}

\title{Compact ideals and rigidity of representations for amenable operator algebras}

\thanks{The first author was partially supported by an FQRNT postdoctoral fellowship and an NSERC Discovery Grant.  The second author was partially supported by an NSERC Discovery Grant.}
\author
	[R. Clou\^{a}tre]{{Rapha\"el Clou\^{a}tre}}
\address
	{Department of  Mathematics\\
	University of Manitoba\\
	Winnipeg, Manitoba \\
	Canada  \ \ \ R3T 2N2}
\email{raphael.clouatre@umanitoba.ca}
\author
	[L.W. Marcoux]{{Laurent W.~Marcoux}}
\address
	{Department of Pure Mathematics\\
	University of Waterloo\\
	Waterloo, Ontario \\
	Canada  \ \ \ N2L 3G1}
\email{LWMarcoux@uwaterloo.ca}

\begin{abstract}
We examine rigidity phenomena for representations of amenable operator algebras which have an ideal of compact operators. We establish that a generalized version of Kadison's conjecture on completely bounded homomorphisms holds for the algebra if the associated quotient is abelian. We also prove that injective completely bounded representations of the algebra are similar to complete isometries. The main motivating example for these investigations is the recent construction of Choi, Farah and Ozawa of an amenable operator algebra that is not similar to a $C^*$-algebra, and we show how it fits into our framework. All of our results hold in the presence of the total reduction property, a property weaker than amenability.
\end{abstract}

\maketitle
\markboth{\textsc{R. Clou\^atre and L.W. Marcoux}}{\textsc{Rigidity of representations for amenable operator algebras}}


\section{Introduction} \label{secintro}

Let $\cA$ be a Banach algebra.   By a \emph{representation} of $\cA$, we shall mean a continuous algebra homomorphism of $\cA$ into $B(\cH)$, the algebra of bounded linear operators on a Hilbert space $\cH$.  Should $\cA$ be an involutive Banach algebra, we do not ask that the representation preserve adjoints -- i.e. we do not require it to be a ${}^*$-representation. 

In \cite{Kad1955}, R.V. Kadison asked whether representations of $C^*$-algebras are necessarily similar to ${}^*$-representations.  Significant progress towards a solution to this problem has been achieved over the years. Of particular interest to us will be the result of  E.~Christensen~\cite{Chr1981}, who showed that amenable $C^*$-algebras enjoy this property. Despite this, to this day the general question remains unanswered. U. Haagerup shed new light on the problem in \cite{Haa1983s}, establishing that a representation of a $C^*$-algebra is similar to a ${}^*$-representation if and only if it is \emph{completely} bounded.  V.~Paulsen~\cite{Pau1984} generalised Haagerup's work by proving that a representation of an operator algebra is completely bounded if and only if it is similar to a completely contractive representation. Based upon these results, G.~Pisier~\cite{Pis2001} formulated a more general version of Kadison's similarity problem which makes sense in a non self-adjoint context.

\bigskip

\noindent{\textbf{The generalised similarity problem.}} Let $\cA$ be an operator algebra and let $\theta:\cA\to B(\cH)$ be a representation. Is $\theta$ necessarily completely bounded?
\bigskip

We say that the algebra $\cA$ has the \emph{SP property} if that problem has an affirmative answer.  It is worth observing that the SP property implies the existence of an increasing function $\varphi: [0, \infty) \to [0, \infty)$ such that for any representation $\theta$  we have the inequality  $\| \theta \|_{cb} \le \varphi( \| \theta \|)$.   In fact, Pisier~\cite{Pis1999} introduced his notion of degree to capture the finer properties of the function $\phi$.

The disc algebra, consisting of all those continuous functions on the closed complex unit disc that are holomorphic on the interior, is an example of an operator algebra which fails to have the SP property \cite{Pis1997}.  In fact, this result is Pisier's celebrated counterexample to the Halmos Problem~\cite{Hal1970}, which asked whether every polynomially bounded operator on a Hilbert space is similar to a contraction. In the positive direction, it is clear that the SP property is preserved under similarity, and so we infer from Christensen's theorem cited above~\cite{Chr1981} that any operator algebra which is similar to an amenable $C^*$-algebra must have the SP property. It is a natural impulse, then, to wonder whether every amenable operator algebra has the SP property, or better yet, to try to solve the generalised similarity problem for amenable operator algebras by showing that every such algebra is similar to a $C^*$-algebra. There are in fact a number of classes of amenable operator algebras which have been shown to be similar to $C^*$-algebras, including amenable algebras of compact operators and abelian amenable algebras (\cite{Sei1977},\cite{Gif1997},\cite{Cho2013},\cite{MP2015},\cite{CM2016rfd}).

However, it has recently been made manifest that this desirable similarity property does not hold in general. Indeed, an example of an amenable operator algebra which fails to be similar to a $C^*$-algebra was constructed in~\cite{CFO2014} by Y. Choi, I. Farah and N. Ozawa.  It follows that a positive solution to the SP problem for amenable operator algebras cannot be achieved through the aforementioned approach using similarities and Christensen's theorem. Interestingly, as we shall see below, their algebra \emph{does} have the SP property, and establishing and generalising this property was one of the driving motivations behind this work.  

The SP property for an operator algebra may be interpreted as a rigidity phenomenon for representations. The second main theme of this paper is another rigidity property related to injective completely bounded representations. We investigate when such maps are necessarily similar to  completely isometric representations. Part of our motivation stems from the fact that this latter class of maps is very rigid, as they lift to ${}^*$-isomorphisms of the associated $C^*$-envelopes; this is sometimes referred to as Arveson's implementation theorem~\cite{Arv1972}. The question of determining when a representation is similar to a complete isometry was considered by the first author in~\cite{Clo2015}, prompted by classification problems for certain Hilbert space contractions, and therein it was shown to have an affirmative answer in various contexts. We will show here that this question also has an affirmative answer for the operator algebra constructed by Choi, Farah and Ozawa.

In fact, we will be working with a class of operator algebras that is strictly larger than the class of  amenable ones. Its defining characteristic is the so-called \emph{total reduction property}. This notion was introduced by J.A. Gifford in \cite{Gif1997}, where he showed amongst many other things that the class of $C^*$-algebras which enjoy the SP property coincides with the class of $C^*$-algebras possessing the total reduction property.   It is worth mentioning that this characterization does not transfer over to the non self-adjoint world. Indeed, the algebra of upper-triangular $2 \times 2$ complex matrices constitutes an example of an operator algebra which does not have the total reduction property, but which nonetheless possesses the SP property \cite[page 54]{Gif1997}.

Next, we outline the organization of the paper and our main results. Section~\ref{secPr} introduces background material. In Section~\ref{secSP}, we analyze ideals of operator algebras with the total reduction property from the perspective of the SP property, and prove the following result (Theorem~\ref{thm3.08}), parts of which are already known.

\begin{thm}\label{thmA}
Let $\cA$ be an operator algebra with the total reduction property, and let $\cJ \subset \cA$ be a closed two-sided ideal of $\cA$.   The following statements are equivalent.
\begin{enumerate}

\item[\rm{(i)}] $\cA$ has the SP property. 

\item[\rm{(ii)}] Both $\cJ$ and $\cA/\cJ$ have the SP property.

\end{enumerate}
\end{thm}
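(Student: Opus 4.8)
The plan is to prove the two implications separately, the central tool in both being the existence of a two-sided bounded approximate identity $(e_\lambda)$ for the closed ideal $\cJ$ (I expect this to be available from the preliminary material of Section~\ref{secPr}, since ideals of algebras with the total reduction property are well behaved in this respect), together with the elementary device of replacing a representation of $\cJ$ by a nondegenerate one. Throughout I would freely use that the inclusion $\cJ \hookrightarrow \cA$ is completely isometric and that the quotient map $q \colon \cA \to \cA/\cJ$ is a complete quotient map, both standard facts in the theory of operator algebras.

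For the implication (i) $\Rightarrow$ (ii), the quotient is the easy half: given a representation $\sigma \colon \cA/\cJ \to B(\cH)$, the composition $\sigma \circ q$ is a representation of $\cA$ and hence completely bounded by (i), and since $q$ is a complete quotient map one reads off $\|\sigma\|_{cb} \le \|\sigma \circ q\|_{cb} < \infty$. For the ideal I would start with a representation $\pi \colon \cJ \to B(\cH)$ and pass to the essential subspace $\cH_0 = \overline{\pi(\cJ)\cH}$. Writing $\pi$ as an upper triangular operator matrix with respect to $\cH = \cH_0 \oplus \cH_0^\perp$, the nonzero entries are the nondegenerate representation $\pi_0 = \pi|_{\cH_0}$ and an off-diagonal corner $R$ obeying the cocycle identity $R(j_1 j_2) = \pi_0(j_1) R(j_2)$; a weak-$*$ limit point $V$ of the bounded net $(R(e_\lambda))$ then yields $R(j) = \pi_0(j) V$, so that $\pi$ is similar to $\pi_0 \oplus 0$ through the bounded invertible operator $\bigl(\begin{smallmatrix} I & -V \\ 0 & I \end{smallmatrix}\bigr)$. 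It therefore suffices to show that $\pi_0$ is completely bounded, and here I would extend $\pi_0$ to $\cA$ by $\tilde\pi(a) = \textup{SOT-}\lim_\lambda \pi_0(a e_\lambda)$; since $a e_\lambda \in \cJ$ and $\pi_0$ is nondegenerate, this limit exists and defines a bounded homomorphism extending $\pi_0$, which is completely bounded by (i). Restricting $\tilde\pi$ back to $\cJ$ gives the claim.

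For (ii) $\Rightarrow$ (i), let $\theta \colon \cA \to B(\cH)$ be a representation. Its restriction $\theta|_\cJ$ is completely bounded by the SP property of $\cJ$. The essential subspace $\cH_0 = \overline{\theta(\cJ)\cH}$ is invariant for $\theta(\cA)$ because $\cJ$ is an ideal, so the total reduction property of $\cA$ supplies a closed invariant complement $\cH_1$ with $\cH = \cH_0 \oplus \cH_1$. For $j \in \cJ$ one checks $\theta(j)\cH_1 \subseteq \cH_0 \cap \cH_1 = \{0\}$, so $\theta|_{\cH_1}$ factors through $\cA/\cJ$ and is completely bounded by the SP property of $\cA/\cJ$. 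On $\cH_0$, the fact that $\theta(e_\lambda) \to I$ strongly gives $\theta^{(n)}([a_{ik}])\xi = \lim_\lambda \theta^{(n)}([a_{ik} e_\lambda])\xi$ for $\xi \in \cH_0^{(n)}$, and combining this with $\|[a_{ik} e_\lambda]\| \le \|e_\lambda\|\,\|[a_{ik}]\|$ and the lower semicontinuity of the norm under strong limits yields $\|\theta|_{\cH_0}\|_{cb} \le (\sup_\lambda \|e_\lambda\|)\,\|\theta|_\cJ\|_{cb} < \infty$. Finally, the (oblique) decomposition $\cH = \cH_0 \oplus \cH_1$ is implemented by a bounded idempotent commuting with $\theta(\cA)$, so $\theta$ is similar to $\theta|_{\cH_0} \oplus \theta|_{\cH_1}$ and is therefore completely bounded.

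I expect the main obstacle to lie in the ideal half of (i) $\Rightarrow$ (ii): justifying the reduction to a nondegenerate representation and controlling the completely bounded norm through the extension $\tilde\pi$. Both hinge on a genuine two-sided bounded approximate identity for $\cJ$, so the real crux is the structural input that such identities exist for closed ideals of algebras with the total reduction property. Once that is secured, the weak-$*$ compactness argument producing $V$ and the approximate-identity estimates are routine.
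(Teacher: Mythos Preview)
Your argument is correct and follows essentially the same strategy as the paper: both directions hinge on the bounded approximate identity for $\cJ$ (Theorem~\ref{thmideal}), the decomposition of a representation along the essential subspace $\overline{\theta(\cJ)\cH}$, and the extension/identification of the nondegenerate part via $\theta(a)\theta(j)\xi=\theta(aj)\xi$. The paper packages these steps as Lemmas~\ref{lemnondeg}, \ref{lemtrprep} and \ref{lemext}, whereas you carry them out by hand (using a weak-$*$ cluster point of $R(e_\lambda)$ for the reduction to nondegeneracy, and the direct estimate $\|\theta|_{\cH_0}\|_{cb}\le(\sup_\lambda\|e_\lambda\|)\|\theta|_\cJ\|_{cb}$ in place of the uniqueness-of-extension argument), but the underlying ideas are the same.
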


The following interesting consequence (Corollary \ref{cor3.06}) is one of our main results.

\begin{cor}
Let $\cA \subset B(\cH)$ be an operator algebra with the total reduction property, and let $\cJ\subset \cA$ be a closed two-sided ideal of $\cA$ which consists of compact operators on $\cH$.   Assume that $\cA/ \cJ$ is abelian.  Then, $\cA$ has the  SP property.
\end{cor}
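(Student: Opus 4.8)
The plan is to reduce the statement to its two natural constituents by means of Theorem~\ref{thmA}. Since $\cA$ has the total reduction property and $\cJ$ is a closed two-sided ideal, that theorem asserts that $\cA$ has the SP property precisely when both $\cJ$ and $\cA/\cJ$ do. It therefore suffices to verify the SP property separately for the quotient $\cA/\cJ$ and for the ideal $\cJ$, and the substance of the argument lies in analysing these two pieces.

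I would first dispose of the quotient $\cA/\cJ$. The total reduction property descends to quotients by closed two-sided ideals: a bounded representation of $\cA/\cJ$ composes with the quotient map to give a bounded representation of $\cA$ annihilating $\cJ$, and closed invariant subspaces, together with their invariant complements, correspond under this identification. Hence $\cA/\cJ$ is an abelian operator algebra with the total reduction property, and such an algebra has the SP property---indeed it is similar to a commutative $C^*$-algebra $C(X)$, every bounded representation of $C(X)$ is similar to a $*$-representation (since $C(X)$ is amenable, by Christensen's theorem) and is therefore completely bounded, and the SP property is a similarity invariant. It is precisely the total reducibility that rules out pathologies such as the disc algebra, which is abelian and fails the SP property but likewise fails the total reduction property.

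The serious work, and the step I expect to be the main obstacle, is to establish the SP property for the ideal $\cJ$; here the hypothesis that $\cJ$ consists of compact operators is essential. Using the structure theory of algebras of compact operators, together with the total reducibility furnished by the total reduction property, I would show that $\cJ$ is similar to a $C^*$-algebra of compact operators. The delicate point is to manufacture, out of the total reduction property, enough invariant idempotents to conjugate the compact operators of $\cJ$ into a self-adjoint model---rather than merely to bound them completely---and to control the passage from representations of the ideal to the total reducibility hypothesis, exploiting the ideal structure and an approximate identity. Once this similarity is secured the rest is routine: a $C^*$-algebra of compact operators is nuclear, hence amenable, so by Christensen's theorem it has the SP property, and by similarity invariance so does $\cJ$.

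With both $\cJ$ and $\cA/\cJ$ shown to possess the SP property, a final application of Theorem~\ref{thmA} yields the SP property for $\cA$, which is the desired conclusion.
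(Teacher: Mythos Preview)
Your proposal is correct and follows essentially the same route as the paper: reduce via Theorem~\ref{thmA} to showing the SP property for $\cJ$ and $\cA/\cJ$ separately, then establish each by exhibiting a similarity to a $C^*$-algebra (compactness for $\cJ$, abelianness for $\cA/\cJ$, both combined with the total reduction property). The paper dispatches these steps by citing the preliminaries (Theorems~\ref{thmideal}, \ref{thmsimC*}, \ref{thmGifKSP}) rather than sketching their proofs as you do, and it invokes Gifford's characterisation (Theorem~\ref{thmGifKSP}) in place of Christensen's theorem, but the strategy is identical.
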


In Section \ref{seccis}, we turn to the question of determining which injective, completely bounded representations are similar to complete isometries and we establish our other main result (Corollary~\ref{cor5.06}).
\begin{thm} \label{thmB}
Let $\cA \subset B(\cH)$ be an operator algebra with the total reduction property, and let $\cJ\subset \cA$ be a closed two-sided ideal of $\cA$ which acts non-degenerately on $\cH$.   Assume that $\cJ$ consists of compact operators on $\cH$. Let $\theta: \cA \to B(\hilb_\theta)$ be an injective completely bounded representation.  
Then, there exist invertible operators $X \in B(\hilb)$ and $Y \in B(\hilb_\theta)$ such that the map
\[
X^{-1} a X \mapsto Y^{-1} \theta(a) Y, \ \ \ a \in \cA \]
is completely isometric.
\end{thm}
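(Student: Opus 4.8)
The plan is to produce invertible operators $X \in B(\cH)$ and $Y \in B(\hilb_\theta)$ so that the two representations $\rho = X^{-1}(\cdot)X$ and $\sigma = Y^{-1}\theta(\cdot)Y$ each carry $\cJ$ onto a $C^*$-algebra of \emph{compact} operators, and then to show that this single normalization already forces $\|[\rho(a_{kl})]\| = \|[\sigma(a_{kl})]\|$ for every matrix $[a_{kl}]$ over $\cA$; by definition this is exactly the assertion that $X^{-1}aX \mapsto Y^{-1}\theta(a)Y$ is completely isometric. To construct $X$, I would use that $\cJ$ is a closed ideal of an algebra with the total reduction property consisting of compact operators, so by the similarity results cited in the introduction (together with the behaviour of the total reduction property for ideals) there is an invertible $X$ with $\cB := X^{-1}\cJ X$ a $C^*$-algebra of compact operators on $\cH$; only the selfadjointness of the \emph{range} is needed, not that $\rho|_\cJ$ be $*$-preserving. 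For $Y$, note that $\theta|_\cJ$ is an injective completely bounded representation of $\cJ \cong_{cb} \cB$, and $\cB$, being a $c_0$-direct sum of algebras of compact operators, is amenable; Haagerup's and Christensen's theorems~\cite{Haa1983s,Chr1981} then give an invertible $Y$ with $Y^{-1}\theta(\cJ)Y = \tau(\cB)$ for a $*$-representation $\tau$. Injectivity of $\theta$ makes $\tau$ faithful, so $\tau(\cB)$ is again a $C^*$-algebra of compact operators; thus $\rho(\cJ)$ and $\sigma(\cJ)$ are faithful $*$-images of a single abstract $C^*$-algebra $\mathfrak{C} \cong \bigoplus_i \mathcal{K}(\cH_i)$.

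The core of the argument is a structural analysis of $\rho$ and $\sigma$ resting on this normalization. Writing the isotypic decompositions $\cH = \bigoplus_i \cH_i \otimes \mathcal{M}_i$ and $\hilb_\theta = \bigoplus_i \cH_i \otimes \mathcal{N}_i$ induced by $\rho(\cJ)$ and $\sigma(\cJ)$ (all multiplicity spaces nonzero, by faithfulness, and after passing to the non-degenerate parts, which are reducing), I would first show that $\rho(\cA)$ and $\sigma(\cA)$ are block diagonal for these decompositions. Since $\cJ$ is a two-sided ideal acting non-degenerately, an approximate-unit argument does this: for an approximate unit $e_\lambda$ in $\cJ$ the element $\rho(a)\rho(e_\lambda) = \rho(ae_\lambda)$ lies in the block-diagonal algebra $\rho(\cJ)$, and letting $\rho(e_\lambda) \to 1$ strongly shows $\rho(a)$ commutes with each central projection $z_i$. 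Within the $i$-th block, $\rho(a)$ is a left multiplier of $\mathcal{K}(\cH_i) \otimes 1_{\mathcal{M}_i}$, and a direct computation identifies all such multipliers as exactly $B(\cH_i)\otimes 1_{\mathcal{M}_i}$; hence $\rho(a) = \bigoplus_i \pi_i(a) \otimes 1_{\mathcal{M}_i}$ for representations $\pi_i : \cA \to B(\cH_i)$ with $\pi_i|_\cJ$ irreducible, and likewise $\sigma(a) = \bigoplus_i \rho_i'(a) \otimes 1_{\mathcal{N}_i}$. As amplification is completely isometric, this yields $\|[\rho(a_{kl})]\| = \sup_i \|[\pi_i(a_{kl})]\|$ and $\|[\sigma(a_{kl})]\| = \sup_i \|[\rho_i'(a_{kl})]\|$.

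It then remains to match the two families $\{\pi_i\}$ and $\{\rho_i'\}$. After identifying the common spaces $\cH_i$ by unitaries, $\pi_i|_\cJ$ and $\rho_i'|_\cJ$ are each the $i$-th irreducible representation of $\cJ$ determined by $\rho|_\cJ$; here injectivity is essential, since it guarantees that $\sigma$ realizes \emph{every} irreducible piece of $\mathfrak{C}$, so that the two index sets coincide. A non-degeneracy argument then forces $\pi_i = \rho_i'$: for $a \in \cA$ and $j \in \cJ$ one has $\pi_i(a)\pi_i(j) = \pi_i(aj) = \rho_i'(aj) = \rho_i'(a)\rho_i'(j)$ with $\pi_i(j) = \rho_i'(j)$, whence $(\pi_i(a)-\rho_i'(a))\pi_i(j) = 0$ for all $j$, and non-degeneracy of $\pi_i|_\cJ$ gives $\pi_i(a) = \rho_i'(a)$. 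Therefore the two suprema agree for every matrix $[a_{kl}]$, and the map is completely isometric.

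I expect the main obstacle to be the structural work of the second paragraph, and above all the left-multiplier identification $\rho(a) = \bigoplus_i \pi_i(a) \otimes 1_{\mathcal{M}_i}$, since this is precisely what collapses the comparison to the canonical irreducible pieces on which the unique-extension rigidity can operate. The simultaneous normalization of the first paragraph is also delicate, being the point at which compactness, amenability of $\cB$, and injectivity of $\theta$ must be combined to present $\rho(\cJ)$ and $\sigma(\cJ)$ as faithful $*$-images of the \emph{same} $C^*$-algebra with matching spectra; and the passage to the non-degenerate parts of the two actions is where the total reduction property is again invoked to split off the relevant invariant subspaces.
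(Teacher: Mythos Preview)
Your construction of $X$ is correct and matches the paper's first move: after conjugation, $\cB := X^{-1}\cA X$ contains the self-adjoint, non-degenerately acting (hence essential) ideal $\cJ' := X^{-1}\cJ X$. The gap lies in how you choose $Y$. You select $Y$ using only $\theta|_\cJ$, so that $\sigma|_\cJ$ becomes a $*$-representation of the compact $C^*$-algebra $\cJ'$. This does make $\hilb_1 = \overline{\sigma(\cJ)\hilb_\theta}$ reducing for $\sigma(\cA)$, and your isotypic/multiplier analysis on $\hilb_1$ is fine, giving $\|\sigma_1^{(n)}(A)\| = \|\rho^{(n)}(A)\|$ for every matrix $A$ over $\cA$. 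But on $\hilb_2 = \hilb_1^\perp$ you only know that $\sigma_2 := \sigma|_{\hilb_2}$ annihilates $\cJ$; nothing in your choice of $Y$ bounds $\sigma_2$. Your displayed identity $\|[\sigma(a_{kl})]\| = \sup_i \|[\rho_i'(a_{kl})]\|$ therefore ignores the summand $\sigma_2$, which is an arbitrary bounded representation of $\cA/\cJ$ on $\hilb_2$. A generic $Y$ satisfying only the $\cJ'$-condition will not make $\sigma_2$ completely contractive with respect to the $\rho$-norms, and then $\|\sigma^{(n)}(A)\| = \max\bigl(\|\sigma_1^{(n)}(A)\|,\|\sigma_2^{(n)}(A)\|\bigr)$ can strictly exceed $\|\rho^{(n)}(A)\|$, so the map is not isometric.

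The repair is exactly the paper's choice of $Y$: apply Paulsen's theorem to the representation $b \mapsto \theta(XbX^{-1})$ of \emph{all} of $\cB$, not just to its restriction to $\cJ'$. Then $\sigma$ is completely contractive on $\cB$, so $\sigma_2$ is automatically controlled, and $\sigma|_{\cJ'}$ is still a $*$-representation (a contractive homomorphism of a $C^*$-algebra is a $*$-homomorphism). With this $Y$ your isotypic argument is correct but no longer needed: the paper simply observes that the splitting $\sigma = \sigma_1 \oplus \tau(\cdot + \cJ')$ has $\sigma_1$ equal to the restriction of a $*$-representation of $C^*(\cB)$, which is faithful because $\cJ'$ is essential and $\theta$ is injective, and hence completely isometric on $\cB$; combined with $\tau$ completely contractive, this forces $\sigma$ to be completely isometric.
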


Finally, in Section \ref{secCFO}, we recall the basic features of the operator algebra constructed by Choi, Farah and Ozawa. We then apply the general results proved in the previous sections and obtain the following (Theorems \ref{thmCFOSP} and \ref{thmCFOcis}).


\begin{thm} \label{thmC}
Let $\cA$ be the operator algebra of Choi, Farah and Ozawa. Then $\cA$ has the  SP property. Moreover, every injective representation of $\cA$ is similar to a completely isometric representation.
\end{thm}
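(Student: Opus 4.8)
The plan is to verify that the Choi--Farah--Ozawa algebra $\cA$ fits the framework of the general results established in Sections~\ref{secSP} and~\ref{seccis}, and then to quote those results. Accordingly, the first task is to extract from the construction the four structural features we need: that $\cA$ is amenable and hence, via Gifford's implication that amenability yields the total reduction property, has the total reduction property; that $\cA$ can be realized as a subalgebra of some $B(\cH)$ so that $\cJ := \cA \cap \mathcal{K}(\cH)$ is a closed two-sided ideal consisting of compact operators; that $\cJ$ acts non-degenerately on $\cH$; and that the quotient $\cA/\cJ$ is abelian. Once these are in hand, both halves of the theorem become applications of results proved above.

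For the SP property I would argue as follows. Granting the structural facts above, $\cA$ has the total reduction property, $\cJ$ consists of compact operators, and $\cA/\cJ$ is abelian, so the hypotheses of Corollary~\ref{cor3.06} are satisfied verbatim; that corollary then yields that $\cA$ has the SP property. Alternatively, one may factor this through Theorem~\ref{thm3.08} by checking directly that $\cJ$ and the abelian quotient each have the SP property.

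For the second assertion, let $\theta : \cA \to B(\cH_\theta)$ be an injective representation. By the SP property just established, $\theta$ is completely bounded. Since $\cA \subset B(\cH)$ has the total reduction property, $\cJ$ is a closed two-sided ideal acting non-degenerately on $\cH$ and consisting of compacts, and $\theta$ is an injective completely bounded representation, Corollary~\ref{cor5.06} applies and produces invertible operators $X \in B(\cH)$ and $Y \in B(\cH_\theta)$ for which $X^{-1}aX \mapsto Y^{-1}\theta(a)Y$ is completely isometric. Reading this off as a statement about $\theta$ alone --- namely that $\theta$ becomes, after conjugation by $Y$, completely isometrically isomorphic to a similar copy of the identity embedding, which is itself completely isometric --- gives that $\theta$ is similar to a completely isometric representation.

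I expect the main obstacle to lie not in the abstract deductions, which are direct invocations of the earlier theorems, but in the first step: confirming that the concrete Choi--Farah--Ozawa algebra genuinely carries all four structural properties. In particular, the non-degeneracy of the compact ideal and the precise identification of the abelian quotient must be read carefully out of their construction, and the implication ``amenable $\Rightarrow$ total reduction property'' must be recorded. A secondary, more cosmetic point is the reconciliation of the conclusion of Corollary~\ref{cor5.06} --- phrased symmetrically in terms of both $X$ and $Y$ --- with the clean statement that $\theta$ is similar to a complete isometry; this is handled by observing that the given embedding $\cA \subset B(\cH)$ is completely isometric and tracking the similarities on each side.
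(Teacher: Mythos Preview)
Your argument for the SP property is exactly the paper's: amenability gives the total reduction property, and then Corollary~\ref{cor3.06} applies. Nothing to add there.

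For the second half, however, your route differs from the paper's and your ``cosmetic'' reconciliation does not actually work as written. The paper does \emph{not} invoke Corollary~\ref{cor5.06}; it invokes Corollary~\ref{cor5.05} directly. The point is that in the Choi--Farah--Ozawa construction the ideal $\cJ$ is already a $C^*$-algebra (it is the $c_0$-direct sum of copies of $\bbM_2$), hence self-adjoint, hence a two-sided ideal of $C^*(\cA)$ which is essential because it acts non-degenerately. Corollary~\ref{cor5.05} then produces a single invertible $Y$ with $a\mapsto Y^{-1}\theta(a)Y$ completely isometric --- precisely the desired conclusion.

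By going through Corollary~\ref{cor5.06} you pick up the extra similarity $X$ on the domain side, and your proposed fix (``the given embedding $\cA\subset B(\cH)$ is completely isometric'') does not eliminate it: the map $a\mapsto X^{-1}aX$ is generally not completely isometric, so knowing that $X^{-1}aX\mapsto Y^{-1}\theta(a)Y$ is completely isometric does not imply that $a\mapsto Y^{-1}\theta(a)Y$ is. The correct way to dispose of $X$ is to notice that in the \emph{proof} of Corollary~\ref{cor5.06} the role of $X$ is solely to conjugate $\cJ$ into a self-adjoint algebra; since $\cJ$ is already self-adjoint here, one may take $X=I$. But once you observe that, you are effectively just applying Corollary~\ref{cor5.05}, which is what the paper does.
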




\section{Preliminaries}\label{secPr}

\subsection{Amenable operator algebras and completely bounded maps}

An \emph{operator algebra} is a subalgebra of $B(\cH)$ which is closed in the norm topology.  
If $\cA\subset B(\cH)$ is an operator algebra, then for each integer $n \ge 1$ the algebra $\bbM_n(\cA)$ of $n\times n$ matrices over $\cA$ may be endowed with a norm obtained by viewing it as a subalgebra of the $C^*$-algebra $B(\hilb^{(n)})$. If $\cB$ is another operator algebra and $\phi: \cA \to \cB$ is a linear map, then for each integer $n\geq 1$ we obtain a bounded linear map 
\[
\phi^{(n)}: \bbM_n(\cA) \to \bbM_n(\cB)
\]
by setting 
\[
\phi^{(n)} ([a_{i,j}] )= [ \phi(a_{i,j})]
\]
for all $[a_{i,j}] \in \bbM_n(\cA)$.   We say that $\phi$ is \emph{completely bounded} if the quantity
\[
\norm \phi \norm_{cb} = \sup_{n \ge 1} \norm \phi^{(n)} \norm
\]
 is finite.  We say that $\phi$ is \emph{completely contractive} (respectively, \emph{completely isometric}), if each map $\phi^{(n)}$ is contractive (respectively, isometric). An excellent standard reference about these topics is \cite{Pau2002}. We require the following well-known fact, which is \cite[Theorem 3.1]{Pau1984pb} (see also \cite{Pau1984} for a quantitative version).
 
 \begin{thm}\label{thmPaulsen}
 Let $\cA$ be an operator algebra and let $\theta:\cA\to B(\cH)$ be a completely bounded homomorphism. Then, there is an invertible operator $Y\in B(\cH)$ with the property that the map
 \[
 a\mapsto Y\theta(a)Y^{-1}, \quad a\in \cA
 \]
 is completely contractive.
 \end{thm}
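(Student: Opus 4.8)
The plan is to reduce to the unital case and then combine the representation theorem for completely bounded maps with the multiplicativity of $\theta$. First I would replace $\cA$ by its unitization $\cA + \mathbb{C}I \subseteq B(\cH)$ and extend $\theta$ to the unital homomorphism $\theta^\dagger$ sending the adjoined unit to $I$; this remains completely bounded, and a similarity witnessing complete contractivity of $\theta^\dagger$ restricts to one for $\theta$. So I assume from now on that $\theta$ is unital with $\|\theta\|_{cb}=K$.

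Next I would invoke the representation theorem for completely bounded maps (Wittstock--Haagerup--Paulsen): there are a Hilbert space $\mathcal{K}$, a unital $*$-representation $\pi$ of $C^*(\cA)$ on $\mathcal{K}$, and bounded operators $V,W:\cH\to\mathcal{K}$ with $\|V\|\,\|W\|=K$ and
\[
\theta(a)=V^*\pi(a)W,\qquad a\in\cA.
\]
Unitality forces $V^*W=I$, so $W$ is bounded below (hence has closed range $\mathcal{M}:=W\cH$) and $E:=WV^*$ is an idempotent on $\mathcal{K}$ with range $\mathcal{M}$. Setting $\mathcal{N}:=\overline{\pi(\cA)W\cH}$, an invariant subspace for $\pi(\cA)$ containing $\mathcal{M}$, the restriction $\sigma:=\pi(\cdot)|_{\mathcal{N}}$ is a completely contractive homomorphism, being the restriction of a $*$-representation to an invariant subspace.

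The heart of the argument is to show that $\mathcal{L}:=\ker(V^*)\cap\mathcal{N}=\ker(E)\cap\mathcal{N}$ is invariant for $\sigma$; this is the only step that genuinely uses that $\theta$ is a homomorphism. For $\xi\in\mathcal{N}$ approximated by vectors of the form $\sum_i\pi(b_i)Wh_i$, one computes $V^*\xi=\lim\sum_i\theta(b_i)h_i$ and $V^*\pi(a)\xi=\lim\,\theta(a)\sum_i\theta(b_i)h_i$ using $\theta(ab_i)=\theta(a)\theta(b_i)$, so $V^*\xi=0$ forces $V^*\pi(a)\xi=0$. With $\mathcal{L}$ invariant, the orthogonal compression $\tau(a):=P_{\mathcal{N}\ominus\mathcal{L}}\,\sigma(a)|_{\mathcal{N}\ominus\mathcal{L}}$ is a completely contractive homomorphism on $\mathcal{N}\ominus\mathcal{L}$. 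Since $E|_{\mathcal{N}}$ is idempotent with range $\mathcal{M}$ and kernel $\mathcal{L}$, we have the (oblique) algebraic direct sum $\mathcal{N}=\mathcal{M}\dotplus\mathcal{L}$, whence $Z:=P_{\mathcal{N}\ominus\mathcal{L}}W:\cH\to\mathcal{N}\ominus\mathcal{L}$ is bounded and invertible. A short computation using $W\theta(a)=E\sigma(a)W$ and the invariance of $\mathcal{L}$ then gives $Z\theta(a)=\tau(a)Z$, i.e. $\theta=Z^{-1}\tau Z$. Finally, choosing a unitary $U:\mathcal{N}\ominus\mathcal{L}\to\cH$ and setting $Y:=UZ\in B(\cH)$ produces an invertible operator with $Y\theta(\cdot)Y^{-1}=U\tau(\cdot)U^*$ completely contractive, as required.

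The main obstacle is the invariance of $\mathcal{L}$: everything else is bookkeeping with the factorization $\theta=V^*\pi(\cdot)W$ and elementary linear algebra of the idempotent $E$, whereas this single step is where multiplicativity is indispensable. A secondary point demanding (routine) care is the unital reduction and the verification that the oblique decomposition $\mathcal{N}=\mathcal{M}\dotplus\mathcal{L}$ renders $Z$ invertible; here the bounded-belowness of $W$, coming from $V^*W=I$, is what is used.
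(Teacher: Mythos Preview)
Your argument is correct and is essentially Paulsen's original proof. Note, however, that the paper does not give its own proof of this theorem: it is stated as a well-known fact and attributed to \cite{Pau1984pb} (with a quantitative version in \cite{Pau1984}), so there is no in-paper proof to compare against. Your write-up reconstructs the standard route via the Wittstock--Haagerup--Paulsen factorization $\theta=V^*\pi(\cdot)W$, the idempotent $E=WV^*$, and the key intertwining identity $V^*\pi(a)|_{\mathcal N}=\theta(a)V^*|_{\mathcal N}$ (equivalently $W\theta(a)=E\pi(a)W$), from which invariance of $\mathcal L=\ker V^*\cap\mathcal N$ and the similarity follow. The bookkeeping you flag---the unital reduction, the oblique splitting $\mathcal N=\mathcal M\dotplus\mathcal L$ making $Z=P_{\mathcal N\ominus\mathcal L}W$ invertible, and the existence of a unitary $U:\mathcal N\ominus\mathcal L\to\cH$---is all routine and handled correctly.
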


Although we will not be dealing with amenability directly, we still recall its definition here, since amenable operator algebras are important examples of the more general algebras to which our results apply.

Let $X$ be a Banach $\cA$-bimodule.   A \emph{derivation} $\delta: \cA \to X$ is a continuous linear map which satisfies 
\[
\delta (a b) = \delta(a) b + a \delta (b)
\]
for all $a, b \in \cA$.    The derivation is \emph{inner} if there exists a fixed element $z \in X$ so that 
\[
\delta (a) = a z - z a
\]
for all $a \in \cA$. Observe that the dual space $X^*$ becomes a \emph{dual Banach $\cA$-bimodule} under the \emph{dual actions} $[a \cdot x^*](x) = x^*(x a)$ and $[x^* \cdot a](x) = x^*(a x)$ for all $a \in \cA$, $x^* \in X^*$ and $x \in X$.   We say that $\cA$ is \emph{amenable} if every derivation of $\cA$ into a dual Banach $\cA$-bimodule (equipped with the dual actions) is inner.

\subsection{The total reduction property}
 We now define the notion which is central to this paper, and which was originally introduced by J.A.~Gifford in his PhD thesis \cite{Gif1997}.  
 
 An operator algebra $\cA$ is said to have the \emph{total reduction property} if whenever $\theta: \cA \to B(\cH)$ is a representation and $M \subset \hilb$ is a closed $\theta(\cA)$-invariant subspace, there exists another closed $\theta(\cA)$-invariant subspace $N$ which is a topological complement of $M$, in the sense that $\hilb = M + N$ and $M \cap N  = \{ 0\}$. Equivalently,  $\cA$ has the total reduction property if any closed $\theta(\cA)$-invariant subspace is the range of some bounded idempotent that commutes with $\theta(\cA)$.  Clearly, the total reduction property is preserved by similarity.

Any amenable operator algebra has the total reduction property \cite[Proposition 2.3.2]{Gif1997}. However, the total reduction property is strictly weaker than amenability. Indeed, if $\cH$ is an infinite dimensional Hilbert space, then $B(\cH)$ has the total reduction property \cite[Corollary~2.4.7]{Gif1997} but it is not amenable since it is not nuclear \cite{Sza1981}, \cite{Con1978}.

Before proceeding, we gather here several useful facts about operator algebras with the total reduction property that we require frequently in the sequel. First, we examine ideals.

\begin{thm}\label{thmideal}
Let $\cA$ be an operator algebra with the total reduction property and let $\cJ\subset \cA$ be a closed two-sided ideal. Then, $\cJ$ has the total reduction property. Moreover, $\cJ$ admits a bounded approximate identity: there exists a bounded net $(e_\lambda)_\lambda$ in $\cJ$ such that
\[
\lim_\lambda \|ae_\lambda-a\|=\lim_\lambda \|e_\lambda a-a\|=0
\]
for every $a\in \cJ$.
\end{thm}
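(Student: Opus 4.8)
The two conclusions are intertwined: my argument that $\cJ$ inherits the total reduction property will rely on $\cJ$ having a bounded approximate identity, so I would establish the approximate identity first and deduce the total reduction property afterwards. Throughout, the decisive structural fact is that $\cJ$ is a two-sided ideal, so that whenever $\theta\colon\cA\to B(\cH)$ is a representation and $\xi\in\cH$, the cyclic subspace $\overline{\theta(\cJ)\xi}$ is invariant not merely under $\theta(\cJ)$ but under all of $\theta(\cA)$, because $a\cJ\subseteq\cJ$. This is what lets me bring the total reduction property of the \emph{ambient} algebra $\cA$ to bear on objects built from $\cJ$ alone, both in constructing the approximate identity and, later, in transferring complementation problems from $\cJ$ up to $\cA$.

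To produce the approximate identity I would first reduce to constructing a bounded right approximate identity and, symmetrically, a bounded left one, after which a standard combination yields a two-sided approximate identity of comparable bound. For the right approximate identity I would pass to the second dual: $\cJ$ has a bounded right approximate identity precisely when $\cJ^{**}$, equipped with the first Arens product, possesses a right identity of finite norm, and such an element arises as a weak-$*$ cluster point of suitable candidate nets. The role of the total reduction property is to supply the requisite bounded idempotent: representing $\cA$ faithfully and invoking the ideal observation above, the canonical invariant subspace $\overline{\theta(\cJ)\cH}$ admits a $\theta(\cA)$-invariant complement, hence is the range of a bounded idempotent commuting with $\theta(\cJ)$, and I would transport this idempotent into $\cJ^{**}$ to serve as the sought identity. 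I expect this to be the principal obstacle, for two reasons: the approximate identity is a \emph{norm} statement, whereas the total reduction property only delivers information about invariant subspaces and the strong operator topology; and the absence of self-adjointness rules out the positivity arguments that make this step routine for $C^*$-algebras. Verifying that the idempotent furnished by the total reduction property descends to a genuine norm-bounded approximate identity is the crux of the whole theorem.

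Granting the bounded approximate identity $(e_\lambda)\subseteq\cJ$, the total reduction property of $\cJ$ follows by transferring complementation up to $\cA$. Let $\theta\colon\cJ\to B(\cH)$ be a representation and $M\subseteq\cH$ a closed $\theta(\cJ)$-invariant subspace. Passing to a subnet, $\theta(e_\lambda)$ converges in the weak operator topology to a bounded idempotent $P$ that commutes with $\theta(\cJ)$, with range $\overline{\theta(\cJ)\cH}$ and kernel $\{\xi:\theta(\cJ)\xi=0\}$, giving a topological decomposition $\cH=\cH_e+\cH_0$ with $\cH_e\cap\cH_0=\{0\}$. For $a\in\cA$ I would then define $\tilde\theta(a)$ as the strong limit of the net $\theta(ae_\lambda)$, whose entries lie in $\cJ$ since $\cJ$ is an ideal: the limit exists because $\theta(ae_\lambda)\theta(j)\xi=\theta(ae_\lambda j)\xi\to\theta(aj)\xi$ on the dense subspace $\theta(\cJ)\cH$ of $\cH_e$, because $\theta(ae_\lambda)$ annihilates $\cH_0$, and because $\|\theta(ae_\lambda)\|\le\|\theta\|\,\|a\|\,\sup_\lambda\|e_\lambda\|$ is uniformly bounded. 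A direct check shows $\tilde\theta\colon\cA\to B(\cH)$ is a bounded homomorphism with $\tilde\theta|_\cJ=\theta$. Since each $\theta(ae_\lambda)$ preserves the closed subspace $M$, so does the limit $\tilde\theta(a)$, and thus $M$ is $\tilde\theta(\cA)$-invariant. Applying the total reduction property of $\cA$ to $\tilde\theta$ produces a $\tilde\theta(\cA)$-invariant complement $N$ of $M$; as $\theta=\tilde\theta|_\cJ$, the subspace $N$ is the desired $\theta(\cJ)$-invariant complement. Because $\theta$ and $M$ were arbitrary, $\cJ$ has the total reduction property.
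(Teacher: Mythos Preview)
The paper does not prove this statement; it simply cites two propositions from Gifford's thesis. So there is no in-paper argument to compare against, and your proposal is in effect an attempt to reconstruct Gifford's results.

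Your second half---deducing the total reduction property of $\cJ$ once a bounded approximate identity is in hand---is correct and is the natural route. The extension $\tilde\theta(a)$ defined as the strong limit of $\theta(ae_\lambda)$ is well defined and bounded for exactly the reasons you give, and since $ae_\lambda\in\cJ$ each operator $\theta(ae_\lambda)$ preserves $M$, so the limit does too; applying the total reduction property of $\cA$ to $\tilde\theta$ then yields the complement. One small remark: once you have established, via a weak-operator cluster point, that $\cH$ decomposes as $\cH_e+\cH_0$, the \emph{full} net $\theta(e_\lambda)$ converges strongly to $P$, so the passage to a subnet is only a device for producing the decomposition and can be dispensed with thereafter.

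The gap is exactly where you place it, and it is not a technicality. The idempotent $E$ supplied by the total reduction property lives in the commutant $\theta(\cA)'\subseteq B(\cH)$; it satisfies $\theta(j)E=E\theta(j)=\theta(j)$ for $j\in\cJ$, but there is no mechanism for ``transporting'' it into $\cJ^{**}$. For a non-self-adjoint algebra there is no identification of $\cJ^{**}$ with a weak-$*$ closed subalgebra of $B(\cH)$ that one could expect to contain $E$, and $E$ has no reason to lie in the weak-operator closure of $\theta(\cJ)$. So while the equivalence ``bounded right approximate identity in $\cJ$ $\Leftrightarrow$ right identity in $\cJ^{**}$'' is correct, you have not actually produced that right identity. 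What is required is a construction \emph{internal to $\cJ$}: for each finite set $F\subseteq\cJ$ and each $\varepsilon>0$, an element $e\in\cJ$ of uniformly bounded norm with $\|ej-j\|<\varepsilon$ and $\|je-j\|<\varepsilon$ for all $j\in F$. Producing such elements uses the total reduction property of $\cA$ in its quantitative form (a uniform bound on the norms of complementing idempotents across all representations) applied to representations tailored to the finite data $F$; your single-idempotent heuristic does not substitute for this, and without it the existence of the bounded approximate identity---hence the entire theorem---remains unproved.
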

\begin{proof}
This follows from \cite[Propositions 3.2.7 and 3.3.3]{Gif1997}. 
\end{proof}


As mentioned in the introduction, one of the most interesting features of the total reduction property is that it characterizes the SP property for $C^*$-algebras \cite[Corollary 2.4.5]{Gif1997}.


\begin{thm}\label{thmGifKSP}
Let $\fA$ be a $C^*$-algebra. Then, $\fA$ has the SP property if and only if it has the total reduction property. 
\end{thm}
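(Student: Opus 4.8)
The statement is an equivalence, and I would treat the two implications separately. The forward implication, that the SP property forces the total reduction property, is the soft one and rests on the known complete-bounded-rigidity of $*$-representations; the reverse implication is the substantive part, and is where I expect Gifford's real work to lie.

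For the direction (i)$\Rightarrow$(ii), let $\theta:\fA\to B(\cH)$ be a representation and $M\subset\cH$ a closed $\theta(\fA)$-invariant subspace. The SP property makes $\theta$ completely bounded, so by Theorem~\ref{thmPaulsen} together with Haagerup's theorem \cite{Haa1983s} (a completely contractive unital homomorphism of a $C^*$-algebra is a $*$-homomorphism, after reducing to the nondegenerate case via an approximate identity) there is an invertible $Y$ for which $\pi(\cdot)=Y\theta(\cdot)Y^{-1}$ is a $*$-representation. Then $YM$ is $\pi(\fA)$-invariant, hence reducing, so the orthogonal projection $P$ onto $\overline{YM}$ commutes with $\pi(\fA)$. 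The operator $Q=Y^{-1}PY$ is then a bounded idempotent with $\operatorname{ran}Q=M$ and $Q\theta(a)=\theta(a)Q$ for all $a\in\fA$, so $\ker Q$ is an invariant topological complement of $M$. This is precisely the total reduction property.

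For the converse, the goal is to show that every representation $\theta:\fA\to B(\cH)$ is completely bounded, and by Haagerup's theorem it suffices to prove that $\theta$ is similar to a $*$-representation. The natural route is to produce a \emph{positive} invertible $T\in B(\cH)$ intertwining $\theta$ with the companion representation $\theta^\sharp(a):=\theta(a^*)^*$, that is $T\theta(a)=\theta^\sharp(a)T$ for all $a\in\fA$; a short computation then shows that $T^{1/2}\theta(\cdot)T^{-1/2}$ is a $*$-representation, which is exactly the similarity we want. The total reduction property should enter as the source of such an intertwiner: forming the representation $\theta\oplus\theta^\sharp$ on $\cH\oplus\cH$ and invoking the property supplies bounded idempotents commuting with $(\theta\oplus\theta^\sharp)(\fA)$ and complementing invariant subspaces, and the aim is to extract from these a graph over the first coordinate, which is the graph of an intertwiner $T$. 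I would keep in reserve the amplification strategy as an alternative: each inflation $\theta^{(n)}$ is a homomorphism of the $C^*$-algebra $\bbM_n(\fA)$, and a suitably uniform version of the total reduction property across the matrix levels would bound $\sup_n\|\theta^{(n)}\|$ directly.

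The main obstacle is precisely this extraction step. The total reduction property only guarantees the \emph{existence} of a bounded invariant idempotent onto each invariant subspace, with no a priori self-adjointness and no uniform control of its norm, whereas a genuine $*$-structure demands the positive invertible intertwiner $T$ above. Bridging this gap---upgrading the merely bounded, non-self-adjoint invariant idempotents furnished by the property to a positive invertible intertwiner (or, in the amplification picture, converting qualitative complementation at every matrix level into a uniform completely bounded estimate, presumably through a uniform-boundedness argument)---is where the real difficulty lies, and is the content of Gifford's analysis in \cite{Gif1997}.
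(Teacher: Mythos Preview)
The paper does not supply a proof of this theorem: it is recorded in the preliminaries as a known result of Gifford, with a bare citation to \cite[Corollary~2.4.5]{Gif1997}. There is therefore no argument in the paper to compare your attempt against.

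That said, your treatment is in the right spirit. The forward implication you give is correct: once the SP property forces $\theta$ to be completely bounded, Haagerup's theorem \cite{Haa1983s} makes $\theta$ similar to a $*$-representation, and your passage from the reducing subspace $\overline{YM}$ to the commuting idempotent $Q=Y^{-1}PY$ with range $M$ is clean (note $Y^{-1}\overline{YM}=M$ since $Y$ is a homeomorphism and $M$ is closed). For the converse you have correctly identified both a viable line of attack---producing a positive invertible intertwiner between $\theta$ and $\theta^\sharp$ via the total reduction property applied to $\theta\oplus\theta^\sharp$---and the genuine difficulty, namely upgrading mere bounded complementation to a \emph{positive} invertible intertwiner with norm control. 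You are right that this is where Gifford's actual work lies, and deferring to \cite{Gif1997} at that point is exactly what the paper itself does.
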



Although a general amenable operator algebra is not necessarily similar to a $C^*$-algebra~\cite{CFO2014}, the statement is valid for subalgebras of compact operators and for abelian algebras. In fact, amenability can even be replaced by the total reduction property.


\begin{thm}\label{thmsimC*}
Let $\cA$ be an operator algebra with the total reduction property. Then, $\cA$ is similar to a $C^*$-algebra if one of the following conditions holds:
\begin{enumerate}

\item[\rm(1)] $\cA$ consists of compact operators,

\item[\rm(2)] $\cA$ is abelian.
\end{enumerate}
\end{thm}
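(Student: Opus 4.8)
The plan is to treat both cases through a common mechanism: I would leverage the total reduction property to produce a single bounded invertible operator $S$ on $\cH$ for which $S^{-1}\cA S$ is \emph{reductive}, meaning that every closed invariant subspace of $S^{-1}\cA S$ is in fact reducing. An operator algebra all of whose invariant subspaces reduce is, under the supplementary hypotheses of being abelian or of consisting of compact operators, forced to be self-adjoint by the classical positive solutions of the reductive algebra problem in these two settings; hence such an $S$ would exhibit $\cA$ as similar to a $C^*$-algebra. Thus the heart of the matter is to upgrade the mere complementation of invariant subspaces furnished by the total reduction property to \emph{orthogonal} complementation, after a similarity.

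Concretely, recall that, by definition, every closed $\cA$-invariant subspace $M\subset\cH$ is the range of a bounded idempotent commuting with $\cA$. The first step is to extract a uniform bound on the norms of such idempotents; I would obtain this from Gifford's analysis of the total reduction property in \cite{Gif1997}, exploiting the stability of the property under the standard operations and, if needed, passing to amplifications. Granted a uniformly bounded, commuting family of complementing idempotents indexed by the invariant subspace lattice, the second step is to manufacture an equivalent Hilbert space inner product on $\cH$ in which all of these idempotents become self-adjoint simultaneously. The positive invertible operator implementing the equivalence of inner products then furnishes the required similarity $S$, and $S^{-1}\cA S$ is reductive.

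For case (1), I would first use Theorem~\ref{thmideal} to equip $\cA$ with a bounded approximate identity and to reduce to the non-degenerate situation, and then decompose $\cH$ along the minimal invariant subspaces afforded by the compactness of the operators in $\cA$. Compactness keeps the irreducible pieces finite dimensional and guarantees that the reductive algebra obtained after the similarity is a $C^*$-algebra of compact operators; this is precisely the content of the known structural results for amenable algebras of compact operators in \cite{Sei1977} and \cite{MP2015}, whose arguments are readily seen to use only the total reduction property and so carry over. For case (2), the abelian hypothesis lets me bypass the decomposition entirely: an abelian reductive algebra is automatically self-adjoint, so once the common mechanism renders $\cA$ reductive I may conclude at once, in the spirit of \cite{CM2016rfd}.

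The main obstacle is the second step above, namely the construction of the global equivalent inner product. The difficulty is that the total reduction property provides complements one subspace at a time, with no a priori simultaneous control, whereas a genuine similarity to a self-adjoint algebra demands that \emph{all} invariant subspaces be orthogonalised by a single operator. Overcoming this requires the uniform norm bound on the complementing idempotents together with a symmetrisation argument for the resulting lattice of idempotents; this is where the full strength of the total reduction property, as opposed to any single instance of complementation, becomes indispensable, and it is the portion of the proof I expect to demand the most care.
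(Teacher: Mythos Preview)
The paper does not actually prove this theorem; it simply cites \cite[Theorem~4.3.13]{Gif1997} for case (1) and \cite[Theorem~2.10]{MP2015} for case (2). Your proposal, by contrast, sketches a genuine argument, so the relevant question is whether that sketch can be completed.

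For case (1) your outline is broadly in the spirit of Gifford's proof: compactness supplies enough finite-dimensional invariant subspaces, a uniform bound on the complementing idempotents is indeed available from \cite{Gif1997}, and the positive solution of the reductive algebra problem for algebras of compact operators then closes the argument.

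There is, however, a genuine gap in case (2). You assert that ``an abelian reductive algebra is automatically self-adjoint'' and plan to conclude from this once your common mechanism has rendered $\cA$ reductive. This assertion is \emph{not} a theorem: already for singly generated (hence commutative) algebras it is the classical reductive operator problem, which by the result of Dyer, Pedersen and Porcelli is equivalent to the invariant subspace problem for Hilbert space operators, and is therefore open. Thus even if you succeed in producing a similarity $S$ with $S^{-1}\cA S$ reductive, commutativity alone cannot deliver self-adjointness. The argument in \cite{MP2015} avoids this obstacle entirely; it does not pass through reductivity of the invariant subspace lattice but instead exploits the Gelfand theory of the abelian Banach algebra together with the total reduction property to build the similarity directly. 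Your proposed ``common mechanism'' therefore cannot serve both cases as stated, and case (2) requires a genuinely different idea.
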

\begin{proof}
Combine \cite[Theorem 4.3.13]{Gif1997} and \cite[Theorem 2.10]{MP2015}
\end{proof}


We close this preliminary section by addressing a technical point. Recall that a representation $\theta:\cA\to B(\cH)$  is said to \emph{act non-degenerately} if $\ol{\theta(\cA) \cH}=\cH$.  The algebra $\cA$ is said to act non-degenerately if the identity representation does. While representations of operator algebras with the total reduction property do not necessarily act non-degenerately, they nearly do.


\begin{lem}\label{lemnondeg}
Let $\cA$ be an operator algebra with the total reduction property and let $\theta:~\cA\to~B(\cH)$ be a representation. Then, there exist a subspace $\cH_0\subset \cH$, a non-degenerately acting representation $\theta_0:\cA\to B(\cH_0)$ and an invertible operator $X\in B(\cH)$  such that 
\[
X\theta(a) X^{-1}=\theta_0(a)\oplus 0, \quad a\in \cA
\]
according to the orthogonal decomposition $\cH=\cH_0\oplus \cH_0^\perp$.
\end{lem}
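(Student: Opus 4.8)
The plan is to reduce the statement to the classical fact that a bounded idempotent on a Hilbert space is similar to an orthogonal projection, after first using the total reduction property to isolate the ``essential part'' of $\theta$. First I would set $\cH_0 = \overline{\theta(\cA)\cH}$, the closed linear span of all vectors $\theta(a)k$ with $a \in \cA$ and $k \in \cH$. This is a closed $\theta(\cA)$-invariant subspace, since $\theta(b)\theta(a)k = \theta(ba)k$. By the total reduction property there is a bounded idempotent $P \in B(\cH)$ with $\operatorname{ran} P = \cH_0$ that commutes with every $\theta(a)$; put $N = \ker P$, so that $N$ is a closed $\theta(\cA)$-invariant topological complement of $\cH_0$. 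The representation is trivial on this complement: for $n \in N$ and $a \in \cA$ we have $\theta(a)n \in \theta(\cA)\cH \subseteq \cH_0$ and simultaneously $\theta(a)n \in N$ by invariance, so $\theta(a)n \in \cH_0 \cap N = \{0\}$; thus $\theta(\cA)N = 0$.

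Next I would verify that $\theta_0 := \theta|_{\cH_0}$ acts non-degenerately on $\cH_0$. The key input here is that $\cA$, being a closed two-sided ideal of itself, admits a bounded approximate identity $(e_\lambda)_\lambda$ by Theorem~\ref{thmideal}. Since $\theta$ is bounded, the net $(\theta(e_\lambda))_\lambda$ is uniformly bounded, and for a vector of the form $h = \theta(a)k$ one has $\theta(e_\lambda)h = \theta(e_\lambda a)k \to \theta(a)k = h$. By linearity and the uniform bound this gives $\theta(e_\lambda)h \to h$ for every $h \in \cH_0$, whence $\cH_0 = \overline{\theta(\cA)\cH_0}$ and $\theta_0$ is non-degenerate.

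It remains to straighten the oblique decomposition $\cH = \cH_0 + N$ (with $\cH_0 \cap N = \{0\}$) into the orthogonal one. Let $Q$ be the orthogonal projection onto $\cH_0$. Because $\operatorname{ran} P = \operatorname{ran} Q = \cH_0$, one has $PQ = Q$ and $QP = P$, and I would check directly that $X := I + P - Q$ is invertible with inverse $I - P + Q$, and that $XP = QX$, so that $XPX^{-1} = Q$. A short computation shows $X$ (and $X^{-1}$) fixes $\cH_0$ pointwise and maps $N = \ker P$ onto $\ker Q = \cH_0^\perp$. Since $\theta(a)$ commutes with $P$, the conjugate $X\theta(a)X^{-1}$ commutes with $Q = XPX^{-1}$ and is therefore block-diagonal for the orthogonal decomposition $\cH = \cH_0 \oplus \cH_0^\perp$. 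The $\cH_0$-block is $X\theta(a)X^{-1}|_{\cH_0} = \theta(a)|_{\cH_0} = \theta_0(a)$, as $X$ is the identity on $\cH_0$ and $\theta(a)$ preserves $\cH_0$, while the $\cH_0^\perp$-block vanishes: a vector of $\cH_0^\perp = XN$ has the form $Xn$ with $n \in N$, and $X\theta(a)X^{-1}(Xn) = X\theta(a)n = 0$. This yields $X\theta(a)X^{-1} = \theta_0(a) \oplus 0$, as required.

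The main obstacle is the last step: producing a \emph{single} invertible operator $X \in B(\cH)$ that converts the bounded, generally non-self-adjoint idempotent $P$ into the orthogonal projection $Q$ while leaving the essential block of $\theta$ untouched. The explicit choice $X = I + P - Q$ handles this cleanly, but one must be careful to confirm its invertibility and to check that conjugation genuinely block-diagonalizes $\theta$ against the \emph{orthogonal} decomposition rather than merely the oblique one supplied by the total reduction property.
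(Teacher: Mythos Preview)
Your argument is correct. The paper proves this lemma by citation: it first notes that $\overline{\theta(\cA)}$ inherits the total reduction property (by \cite[Proposition~3.3.1]{Gif1997}) and then invokes \cite[Lemma~2.5]{CM2016rfd} to obtain the invertible $X$ and the non-degenerate block $\cB_0$, defining $\theta_0(a)=X\theta(a)X^{-1}|_{\cH_0}$. Your route is the same idea carried out explicitly rather than outsourced: you apply the total reduction property of $\cA$ directly to the invariant subspace $\cH_0=\overline{\theta(\cA)\cH}$, produce the commuting idempotent $P$, and then straighten $P$ to the orthogonal projection $Q$ via the classical similarity $X=I+P-Q$. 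The non-degeneracy check via the bounded approximate identity (Theorem~\ref{thmideal} applied to $\cA$ as an ideal of itself) is exactly what underlies the cited lemma. The payoff of your approach is that it is self-contained within the paper and exhibits the operator $X$ concretely; the paper's version is shorter on the page but opaque without the references.
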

\begin{proof}
The algebra $\ol{\theta(\cA)}$ is known to have the total reduction property by \cite[Proposition 3.3.1]{Gif1997}. Then, by \cite[Lemma 2.5]{CM2016rfd}, there is a invertible operator $X\in B(\cH)$ 
along with a subspace $\cH_0\subset \cH$ which is invariant for $X\theta(\cA)X^{-1}$ such that if we set 
\[
\cB_0=X\ol{\theta(\cA)}X^{-1}|_{\cH_0}\subset B(\cH_0)
\]
then
\[
X\ol{\theta(\cA)} X^{-1}=\cB_0\oplus \{0\}
\]
according to the orthogonal decomposition $\cH=\cH_0\oplus \cH_0^\perp$. Furthermore, $\cB_0$ 
is a non-degenerately acting algebra with the total reduction property.
The proof is completed by defining  $\theta_0:\cA\to B(\cH_0)$ as 
\[
\theta_0(a)=X\theta(a)X^{-1}|_{\cH_0}, \quad a\in \cA.
\]
\end{proof}



\section{The SP property for ideals and quotients} \label{secSP}

The aim of this section is to unravel the relationship between the SP property for an algebra, and the SP property for an ideal of the algebra and the associated quotient. We then exploit this relationship to prove one of our main results. The basic tool is the following.

\begin{lem}\label{lemtrprep}
Let $\cA$ be an operator algebra with the total reduction property and let $\cJ\subset \cA$ be a closed two-sided ideal of $\cA$. Let $\theta:\cA\to B(\cH)$ be a representation. Then, there exist a closed subspace $\cH_0\subset \cH$ and two representations 
\[
\rho:\cA\to B(\cH_0), \quad  \tau:\cA\to B(\cH_0^\perp) 
\]
with the following properties:
\begin{enumerate}

\item[\rm{(a)}] the representation $\rho|_{\cJ}:\cJ\to B(\cH_0)$ acts non-degenerately,

\item[\rm{(b)}] $\tau(\cJ)=0$,

\item[\rm{(c)}] there exists an invertible operator $X\in B(\cH)$ such that
\[
X\theta(a)X^{-1}=\rho(a)\oplus \tau(a), \quad a\in \cA.
\]
\end{enumerate} 
\end{lem}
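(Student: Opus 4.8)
The plan is to use the bounded approximate identity of the ideal $\cJ$, guaranteed by Theorem~\ref{thmideal}, to locate the natural subspace on which $\cJ$ acts non-degenerately, and then to invoke the total reduction property to split off a complementary invariant subspace. First I would consider the closed subspace
\[
\cK = \ol{\theta(\cJ)\cH} \subset \cH.
\]
Since $\cJ$ is a two-sided ideal, for $a\in \cA$ and $b\in \cJ$ we have $ab\in \cJ$, so $\theta(a)\theta(b)\cH = \theta(ab)\cH \subset \theta(\cJ)\cH$; passing to closures shows $\cK$ is invariant under $\theta(\cA)$, not merely under $\theta(\cJ)$. The subspace $\cK$ is the candidate essential space for $\cJ$: using the bounded approximate identity $(e_\lambda)_\lambda$, one checks that $\theta(b)\xi = \lim_\lambda \theta(be_\lambda)\xi$ for $b\in\cJ$, $\xi\in\cH$, which is the standard argument showing that $\cJ$ acts non-degenerately on $\cK$ and annihilates the "essential" complement in the sense that $\theta(\cJ)$ kills vectors orthogonal to the range in the appropriate completed-idempotent sense.

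Next I would apply the total reduction property to the $\theta(\cA)$-invariant subspace $\cK$. This furnishes a bounded idempotent $P\in B(\cH)$ with range $\cK$ that commutes with $\theta(\cA)$, and hence a $\theta(\cA)$-invariant topological complement $N = \ker P$ satisfying $\cH = \cK + N$ and $\cK\cap N = \{0\}$. Because $P$ commutes with the representation, both $\cK$ and $N$ are invariant, and $\theta$ decomposes as a (generally non-orthogonal) direct sum of its compressions to $\cK$ and to $N$. The idempotent $P$ need not be self-adjoint, so to obtain the clean orthogonal decomposition $\cH = \cH_0 \oplus \cH_0^\perp$ demanded by the statement I would perform a similarity: there is a standard invertible operator $X$ (built from $P$ and the orthogonal projection onto $\cK$, or equivalently from the closed-graph comparison of the two complementary pairs) conjugating $P$ to the orthogonal projection onto a subspace $\cH_0$ unitarily identified with $\cK$. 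After conjugating by $X$, the representation becomes block diagonal with respect to $\cH = \cH_0\oplus \cH_0^\perp$, and I define $\rho$ to be the $\cH_0$-block and $\tau$ the $\cH_0^\perp$-block.

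It then remains to verify properties (a), (b), (c). Property (c) is immediate from the block-diagonal form produced by the conjugation by $X$. For (a), I would transport the non-degeneracy of $\cJ$ on $\cK$ through the similarity: since $X$ is invertible and $\cH_0 = X\cK$, the essential-space computation above shows $\ol{\rho(\cJ)\cH_0} = \cH_0$. For (b), I would argue that $N$ (equivalently $\cH_0^\perp$ after conjugation) is annihilated by $\theta(\cJ)$: every vector in $\cH$ differs from an element of $\cK$ by an element of $N$, and $\theta(\cJ)$ maps all of $\cH$ into $\cK$, so on the complement $\theta(\cJ)$ must act as $0$ once we pass to the block form where $\cK$ and $N$ are the two summands — concretely, $\theta(b)$ leaves $N$ invariant and maps $N$ into $\cK\cap N = \{0\}$. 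I expect the main obstacle to be this last point together with the passage from the oblique idempotent $P$ to an honest orthogonal decomposition: one must be careful that the similarity $X$ simultaneously straightens $\cK$ and $N$ into orthogonal complements while preserving the two required invariance and annihilation properties, and that the bounded approximate identity argument for non-degeneracy survives conjugation. Once the commuting idempotent is in hand and the essential-space identity $\ol{\theta(\cJ)\cH}=\cK$ is combined with $\theta(\cJ)N = \{0\}$, the rest is bookkeeping.
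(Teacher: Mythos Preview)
Your proposal is correct and follows essentially the same route as the paper: take $\cH_0=\ol{\theta(\cJ)\cH}$, use the bounded approximate identity from Theorem~\ref{thmideal} for non-degeneracy, and use the commuting idempotent supplied by the total reduction property to block-diagonalize via a similarity. The paper streamlines one step by defining $\rho,\tau$ directly as the diagonal compressions with respect to the \emph{orthogonal} decomposition $\cH_0\oplus\cH_0^\perp$ (so $\tau(\cJ)=0$ is immediate from $\theta(\cJ)\cH\subset\cH_0$) and then choosing the explicit upper-triangular similarity $X=\left[\begin{smallmatrix} I & -E_2\\ 0 & I\end{smallmatrix}\right]$ built from the idempotent $E=\left[\begin{smallmatrix} I & E_2\\ 0 & 0\end{smallmatrix}\right]$ to kill the off-diagonal block without moving $\cH_0$, which sidesteps your transport of non-degeneracy through the similarity.
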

\begin{proof}
Consider the subspace $\hilb_0 = \ol{\theta(\cJ) \hilb}$ which is seen to be invariant for $\theta(\cA)$.   Hence, relative to the orthogonal decomposition $\hilb = \hilb_0 \oplus \hilb_0^\perp$, we find
\[
\theta(a) = \begin{bmatrix} \rho(a) & \delta(a) \\ 0 & \tau(a) \end{bmatrix} \]
where 
\[
\rho(a)=\theta(a)|_{\cH_0}, \quad \delta(a)=P_{\cH_0}\theta(a)|_{\cH^\perp_0}, \quad \tau(a)=P_{\cH_0^\perp}\theta(a)|_{\cH^\perp_0}
\]
for every $a\in \cA$.
We note that $\rho$ and $\tau$ are both representations of $\cA$.  It is clear that $\tau(\cJ)=0$ by definition of $\cH_0$. Since $\cA$ has the total reduction property, the ideal $\cJ$ admits a bounded approximate identity $(e_\lambda)_\lambda$ by Theorem \ref{thmideal}.  For $\xi\in \cH$ and $j\in \cJ$ we see that
\[
\theta(j)\xi=\lim_\lambda \theta(j)\theta(e_\lambda)\xi\in \ol{\theta(\cJ)\cH_0},
\]
which shows that $\ol{\theta(\cJ)\cH_0}=\cH_0$.  Thus, $\rho|_\cJ$ acts non-degenerately.

Finally, because $\cA$ has the total reduction property there is an idempotent $E \in B(\hilb)$ that commutes with $\theta(\cA)$ such that $E\cH=\cH_0$. We may write 
\[
E = \begin{bmatrix} I & E_2 \\ 0 & 0 \end{bmatrix}
\]
for some bounded linear operator $E_2: \cH_0^\perp \to \cH_0$. Consider now the invertible operator
\[
X = \begin{bmatrix} I & -E_2 \\ 0 & I \end{bmatrix}\in B(\cH).
\]
Using that $E$ commutes with $\theta(\cA)$, a straightforward calculation establishes that
\[
X^{-1} \theta(a) X = \begin{bmatrix} \rho(a) & 0 \\ 0 & \tau(a) \end{bmatrix}
 \]
 for every $a\in \cA$. 
\end{proof}

Next, we record a standard extension result for representations, modelled closely on the corresponding result for ${}^*$-representations of $C^*$-algebras.

\begin{lem} \label{lemext}
Let $\cA$ be an operator algebra and $\cJ \subset \cA$ be a closed two-sided ideal of $\cA$ which admits a bounded approximate identity $(e_\lambda)_\lambda$.  Let $\theta:\cJ\to B(\cH)$ be a  non-degenerately acting representation.  Then, there is a unique representation $\widehat{\theta}:\cA\to B(\cH)$ which extends $\theta$ and which satisfies
\[
\|\widehat{\theta}^{(n)}\|\leq \left(\sup_\lambda\|e_\lambda\|\right)\|\theta^{(n)}\|
\]
for every $n\in \bbN$.   In particular, if $\theta$ is completely bounded, then so is its extension $\widehat{\theta}$.
\end{lem}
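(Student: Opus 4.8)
The statement asks me to extend a non-degenerate representation $\theta:\cJ\to B(\cH)$ of an ideal to a representation $\widehat\theta:\cA\to B(\cH)$ of the whole algebra, with control on the completely bounded norm. This is the non-selfadjoint analogue of the standard fact that a non-degenerate ${}^*$-representation of an ideal extends uniquely to the multiplier-type action of the ambient $C^*$-algebra, so I would mimic that classical construction while tracking matrix norms. The plan is to \emph{define} the extension by prescribing its action on vectors of the form $\theta(j)\xi$, then verify that the definition is consistent, bounded, multiplicative, and finally estimate the completely bounded norm.

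\smallskip\noindent\textbf{Constructing the extension.}
Fix $a\in\cA$. Since $\theta$ acts non-degenerately, the linear span of $\{\theta(j)\xi : j\in\cJ,\ \xi\in\cH\}$ is dense in $\cH$, so an operator on $\cH$ is determined by its action on such vectors. Because $aj\in\cJ$ for every $j\in\cJ$, the natural definition is
\[
\widehat\theta(a)\bigl(\theta(j)\xi\bigr)=\theta(aj)\xi.
\]
First I would check this is well defined on the dense subspace: if $\sum_k\theta(j_k)\xi_k=0$, I must show $\sum_k\theta(aj_k)\xi_k=0$. This is where the bounded approximate identity $(e_\lambda)_\lambda$ enters: writing $\theta(aj_k)=\lim_\lambda\theta(ae_\lambda)\theta(j_k)$ by non-degeneracy and the approximate identity property, one gets $\sum_k\theta(aj_k)\xi_k=\lim_\lambda\theta(ae_\lambda)\bigl(\sum_k\theta(j_k)\xi_k\bigr)=0$, provided $\theta(ae_\lambda)$ is uniformly bounded. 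Boundedness of $\widehat\theta(a)$ on the dense subspace, and hence its extension to all of $\cH$, follows from the same uniform bound, after which $\widehat\theta(a)\theta(j)=\theta(aj)$ holds on all of $\cH$ by continuity. Multiplicativity $\widehat\theta(ab)=\widehat\theta(a)\widehat\theta(b)$ and the fact that $\widehat\theta|_{\cJ}=\theta$ are then routine verifications on the dense subspace, and uniqueness is immediate since any extension must satisfy $\widehat\theta(a)\theta(j)=\theta(aj)$.

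\smallskip\noindent\textbf{The norm estimate and the main obstacle.}
For the cb-norm bound I would pass to matrices. Let $K=\sup_\lambda\|e_\lambda\|$. Working in $\bbM_n$, for a matrix $[a_{i,j}]$ I want to show $\|\widehat\theta^{(n)}([a_{i,j}])\|\le K\,\|\theta^{(n)}\|$. The key identity is that for each fixed $\lambda$, the matrix $[\,\theta(a_{i,j}e_\lambda)\,]$ factors as $\widehat\theta^{(n)}([a_{i,j}])\cdot\operatorname{diag}(\theta(e_\lambda))$, and the left-hand side equals $\theta^{(n)}([a_{i,j}e_\lambda])$, which has norm at most $\|\theta^{(n)}\|\cdot K$. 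Taking a strong limit in $\lambda$ recovers $\widehat\theta^{(n)}([a_{i,j}])$ acting on the dense subspace, and the norm bound survives the limit. I expect the main obstacle to be exactly this last step: the operators $\operatorname{diag}(\theta(e_\lambda))$ converge only in the strong operator topology, not in norm, so I cannot simply pass to the limit inside a product of operator norms. The careful argument is to evaluate $\widehat\theta^{(n)}([a_{i,j}])$ on a vector in the dense span, insert the approximate identity to rewrite it as $\lim_\lambda\theta^{(n)}([a_{i,j}e_\lambda])$ applied to an appropriate vector, and bound each term by $K\,\|\theta^{(n)}\|$ times the norm of the vector, using lower semicontinuity of the norm under the strong limit. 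Taking the supremum over $n$ then yields $\|\widehat\theta\|_{cb}\le K\,\|\theta\|_{cb}$, so complete boundedness is preserved.
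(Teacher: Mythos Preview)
Your proposal is correct and follows essentially the same route as the paper: define $\widehat\theta(a)\theta(j)\xi=\theta(aj)\xi$, check well-definedness and boundedness via the bounded approximate identity, and for the matrix-level estimate use the diagonal approximate identity $e_\lambda^{(n)}=e_\lambda\oplus\cdots\oplus e_\lambda$ together with the strong convergence $\theta^{(n)}(e_\lambda^{(n)})\to I$ on $\cH^{(n)}$ to bound $\|\widehat\theta^{(n)}(A)\xi\|$. The paper's write-up is marginally cleaner in that, once $\widehat\theta$ is known to exist (so each $\widehat\theta^{(n)}(A)$ is automatically bounded), one may take $\xi\in\cH^{(n)}$ arbitrary rather than in the dense span and compute $\|\widehat\theta^{(n)}(A)\xi\|=\lim_\lambda\|\theta^{(n)}(Ae_\lambda^{(n)})\xi\|\le M\,\|\theta^{(n)}\|\,\|A\|\,\|\xi\|$ directly, avoiding any appeal to lower semicontinuity.
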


\begin{proof}
The existence and uniqueness of the representation $\widehat{\theta}$ follows using a standard argument, based on the fact that $\cH=\ol{\theta(\cJ)\cH}$ and that $(e_\lambda)_\lambda$ is a bounded approximate identity (see for instance \cite[page 14-15]{Arv1976}). In particular, we have that
\[
\widehat{\theta}(a)\theta(j)\xi=\theta(aj)\xi,
\]
for every $a\in \cA, j\in \cJ, \xi\in \cH$ .

Next,  fix $n\in \bbN$ and put $M=\sup_\lambda \|e_\lambda\|$. For each $\lambda$,  define
\[
e_\lambda^{(n)}=e_\lambda\oplus e_\lambda\oplus \ldots \oplus e_\lambda\in \bbM_n(\cJ).
\]
It is easily verified that $(e_\lambda^{(n)})_\lambda$ is a bounded approximate identity for $\bbM_n(\cJ)$ with 
\[
\sup_\lambda \|e^{(n)}_\lambda\|=M.
\]
Moreover, since $\cH=\ol{\theta(\cJ)\cH}$, we see that $(\theta^{(n)}(e^{(n)}_\lambda))_\lambda$ converges strongly to the identity operator on $\cH^{(n)}$. Let $A \in \bbM_n(\cA)$.   For $\xi \in \hilb^{(n)}$, we get 
\begin{align*}
\norm {\widehat{\theta}}^{(n)}(A) \xi \norm
	&= \lim_\lambda \norm {\widehat{\theta}}^{(n)} (A) \theta^{(n)}(e^{(n)}_\lambda)\xi \norm \\
	&= \lim_\lambda \norm \theta^{(n)} (Ae^{(n)}_\lambda) \xi \norm \\
	&\le M\, \norm \theta^{(n)} \norm  \, \|A\| \, \norm \xi \norm, 
\end{align*}
so that $\norm \widehat{\theta} ^{(n)}\norm \le M \norm \theta^{(n)} \norm$. The last statement follows immediately from this.
\end{proof}

Using these two results, we show that for operator algebras with the total reduction property, the SP property can be completely understood through ideals and quotients. 
\begin{thm} \label{thm3.08}
Let $\cA$ be an operator algebra with the total reduction property.  Let $\cJ \subset \cA$ be a closed two-sided ideal of $\cA$.   The following statements are equivalent.
\begin{enumerate}
	\item[(i)]
	$\cA$ has the SP property.
	\item[(ii)]
	 Both $\cJ$ and $\cA/\cJ$ have the SP property.
\end{enumerate}	 
\end{thm}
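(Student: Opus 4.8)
The plan is to reduce both implications to the structural decomposition supplied by Lemmas~\ref{lemtrprep}, \ref{lemext} and \ref{lemnondeg}, together with the elementary permanence properties of complete boundedness: it is preserved under similarities, it passes to and from direct summands, it restricts to subalgebras, and it is stable under composition with complete contractions. Throughout I would use that $\cA/\cJ$ carries a canonical operator algebra structure for which the quotient map $q:\cA\to\cA/\cJ$ is a complete quotient map (in particular completely contractive), so that the hypothesis ``$\cA/\cJ$ has the SP property'' is meaningful and so that matrices over $\cA/\cJ$ lift to matrices over $\cA$ of arbitrarily close norm.

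For $(i)\Rightarrow(ii)$ I would treat the quotient and the ideal separately. Given a representation $\pi:\cA/\cJ\to B(\cH)$, the composition $\pi\circ q$ is a representation of $\cA$, hence completely bounded by hypothesis; lifting matrices through the complete quotient map $q$ then forces $\|\pi^{(n)}\|\le\|\pi\circ q\|_{cb}$ for every $n$, so $\pi$ is completely bounded. For the ideal, let $\psi:\cJ\to B(\cH)$ be a representation. Since $\cJ$ inherits the total reduction property (Theorem~\ref{thmideal}), Lemma~\ref{lemnondeg} lets me assume, after a similarity and the removal of a zero summand, that $\psi$ acts non-degenerately. As $\cJ$ has a bounded approximate identity (Theorem~\ref{thmideal}), Lemma~\ref{lemext} produces an extension $\widehat\psi:\cA\to B(\cH)$, which is completely bounded by the SP property of $\cA$; restricting $\widehat\psi$ back to $\cJ$ recovers $\psi$, so $\psi$ is completely bounded.

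For $(ii)\Rightarrow(i)$, let $\theta:\cA\to B(\cH)$ be a representation and apply Lemma~\ref{lemtrprep} to produce an invertible $X$ and representations $\rho,\tau$ with $X\theta(a)X^{-1}=\rho(a)\oplus\tau(a)$, where $\rho|_\cJ$ acts non-degenerately and $\tau(\cJ)=0$. Since complete boundedness of a direct sum is equivalent to that of each summand, it suffices to show that $\rho$ and $\tau$ are each completely bounded. The map $\tau$ annihilates $\cJ$, so it factors as $\tau=\bar\tau\circ q$ for a representation $\bar\tau$ of $\cA/\cJ$; by the SP property of $\cA/\cJ$ and the complete contractivity of $q$, the map $\tau$ is completely bounded. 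For $\rho$, the restriction $\rho|_\cJ$ is completely bounded by the SP property of $\cJ$, and $\rho$ is precisely the unique extension of $\rho|_\cJ$ to $\cA$ furnished by Lemma~\ref{lemext}; the norm estimate in that lemma then yields $\|\rho^{(n)}\|\le M\,\|(\rho|_\cJ)^{(n)}\|\le M\,\|\rho|_\cJ\|_{cb}$ for every $n$, where $M=\sup_\lambda\|e_\lambda\|$, so $\rho$ is completely bounded.

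The main obstacle is the control of $\rho$ over all of $\cA$ rather than merely over $\cJ$: the SP property of $\cJ$ only bounds $\rho|_\cJ$, and upgrading this to a bound on $\rho$ is exactly where the non-degeneracy of $\rho|_\cJ$, the bounded approximate identity coming from the total reduction property, and the uniqueness-with-norm-estimate of the extension in Lemma~\ref{lemext} combine. A secondary point I would verify carefully is that $\bar\tau$ is genuinely a bounded representation of the operator algebra $\cA/\cJ$ and that $q$ is a complete quotient map, since this is what makes the quotient's SP property applicable.
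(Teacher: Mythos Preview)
Your proposal is correct and follows essentially the same route as the paper: both directions hinge on Lemma~\ref{lemtrprep} for the decomposition, Lemma~\ref{lemext} (together with the bounded approximate identity from Theorem~\ref{thmideal}) for extending and controlling $\rho$, and Lemma~\ref{lemnondeg} for reducing the ideal case to a non-degenerate representation. The only cosmetic difference is that the paper records the quotient step as the equality $\|\theta^{(n)}\circ\pi^{(n)}\|=\|\theta^{(n)}\|$ rather than phrasing it via lifting matrices through a complete quotient map, but this is the same observation.
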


\begin{proof}
First, note that since $\cA$ has the total reduction property, the ideal $\cJ$ admits a bounded approximate identity by Theorem \ref{thmideal}.  Moreover, throughout the proof we let $\pi:\cA\to \cA/\cJ$ denote the quotient map. We now proceed to show the equivalence of (i) and (ii).

Assume first that (i) holds. Let $\theta:\cA/\cJ\to B(\cH)$ be a representation. Then, we have
\[
\|\theta^{(n)}\circ \pi^{(n)}\|=\|\theta^{(n)}\|
\]
for every $n\in \bbN$. Thus, we immediately see that $\cA/\cJ$ has the SP property if $\cA$ has it.   This has nothing to do with the total reduction property.   We now turn to the ideal $\cJ$.  By Lemma~\ref{lemnondeg}, to show that $\cJ$ also has the SP property, it suffices to show that all non-degenerately acting representations of $\cJ$ are completely bounded. Accordingly, let $\theta: \cJ \to B(\cH)$ be a  representation which acts non-degenerately.   By Lemma \ref{lemext}, $\theta$ can be uniquely extended to a representation $\widehat{\theta}: \cA \to B(\hilb)$. Since $\cA$ has the SP property, we see that $\widehat{\theta}$ is completely bounded. Clearly, we have that $\|\theta\|_{cb}\leq \|\widehat{\theta}\|_{cb}$ so that $\theta$ is completely bounded as well. We conclude that $\cJ$ has the SP property and thus that (ii) is verified.

Next, we assume that (ii) holds and show that (i) follows. Let $\theta: \cA \to B(\hilb)$ be a representation.  By Lemma \ref{lemtrprep}, there exist a closed subspace $\cH_0\subset \cH$, two representations 
\[
\rho:\cA\to B(\cH_0), \quad  \tau:\cA\to B(\cH_0^\perp) 
\]
and an invertible operator $X\in B(\cH)$ such that
\[
X\theta(a)X^{-1}=\rho(a)\oplus \tau(a), \quad a\in \cA.
\]
Moreover, $\tau(\cJ)=0$ and  the representation $\rho|_{\cJ}:\cJ\to B(\cH_0)$ acts non-degenerately.

Observe that $\rho|_{\cJ}$ must be completely bounded since $\cJ$ is assumed to have the SP property.   In addition, by Lemma~\ref{lemext} we see that $\rho|_\cJ$ admits a unique extension to a representation $\widehat{\rho|_{\cJ}}$ of $\cA$, which must be completely bounded. By uniqueness of the extension, we conclude that $\rho = \widehat{\rho|_{\cJ}}$ and thus that $\rho$ is completely bounded.   Next, since $\tau(\cJ)=0$ there exists a representation $\widehat{\tau}:\cA/\cJ\to B(\cH_0^\perp)$ such that $\tau=\widehat{\tau}\circ \pi$.  Since $\cA/\cJ$ is assumed to have the SP property, we infer that $\widehat{\tau}$ is completely bounded. But $\pi$ is completely contractive, whence we deduce that $\tau$ is completely bounded. Therefore, $\theta$ is completely bounded, and (i) follows.

\end{proof}

With the help of his powerful ``length" machinery~\cite{Pis1999}, Pisier has established parts of the previous theorem~\cite[Exercise 27.1]{Pis2003}. For arbitrary operator algebras, he shows that (ii) implies (i), while (i) implies that $\cA/\cJ$ has the SP property.   Furthermore, if $\cA$ is a $C^*$-algebra or if $\cA \simeq \cJ \times \cA/\cJ$, then he recovers that (i) implies (ii).    Exploiting our assumption that $\cA$ has the total reduction property, we provided above a different approach for showing the equivalence of (i) and (ii). In our setting, it appears to be new that the SP property of $\cA$ is inherited by the ideal $\cJ$.


Finally, we arrive at the main result of this section.  We will see in Section \ref{secCFO} that it can be applied to the construction of Choi, Farah and Ozawa.

\begin{cor} \label{cor3.06}
Let $\cA \subset B(\cH)$ be an operator algebra with the total reduction property, and let $\cJ\subset \cA$ be a closed two-sided ideal of $\cA$ which consists of compact operators on $\cH$.   Assume that $\cA/ \cJ$ is abelian.  Then, $\cA$ has the  SP property.
\end{cor}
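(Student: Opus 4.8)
The plan is to invoke the equivalence established in Theorem~\ref{thm3.08}: since $\cA$ has the total reduction property and $\cJ$ is a closed two-sided ideal, it suffices to show that both $\cJ$ and the quotient $\cA/\cJ$ enjoy the SP property, and the conclusion will then follow at once.

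For the ideal $\cJ$, I would first observe that $\cJ$ inherits the total reduction property from $\cA$ by Theorem~\ref{thmideal}. Since $\cJ$ moreover consists of compact operators, Theorem~\ref{thmsimC*}(1) shows that $\cJ$ is similar to a $C^*$-algebra $\mathfrak{B}$. Because the total reduction property is preserved under similarity, $\mathfrak{B}$ also has the total reduction property, whence by Gifford's characterization (Theorem~\ref{thmGifKSP}) the $C^*$-algebra $\mathfrak{B}$ has the SP property. As the SP property is itself preserved under similarity, we conclude that $\cJ$ has the SP property.

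For the quotient $\cA/\cJ$, the first step is to check that it too has the total reduction property. Given a representation $\psi:\cA/\cJ\to B(\cH)$ and a closed $\psi(\cA/\cJ)$-invariant subspace $M$, I would consider the composition $\psi\circ\pi:\cA\to B(\cH)$, where $\pi:\cA\to\cA/\cJ$ is the (completely contractive, surjective) quotient map. Since $\pi$ is surjective we have $(\psi\circ\pi)(\cA)=\psi(\cA/\cJ)$, so $M$ is invariant for $(\psi\circ\pi)(\cA)$; applying the total reduction property of $\cA$ to the representation $\psi\circ\pi$ produces a complementary closed invariant subspace, which is automatically $\psi(\cA/\cJ)$-invariant. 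Hence $\cA/\cJ$ has the total reduction property. Being abelian as well, $\cA/\cJ$ is similar to a $C^*$-algebra by Theorem~\ref{thmsimC*}(2), and exactly the same similarity-plus-Gifford argument as above shows it has the SP property.

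With both $\cJ$ and $\cA/\cJ$ shown to have the SP property, Theorem~\ref{thm3.08} yields that $\cA$ has the SP property. The only step that is not immediate from the quoted results is the passage, in both cases, from ``similar to a $C^*$-algebra'' to ``has the SP property''; this is handled uniformly by combining the similarity-invariance of the total reduction property with Gifford's theorem and the similarity-invariance of the SP property. I expect the verification that the quotient $\cA/\cJ$ retains the total reduction property to be the one genuinely new (though routine) ingredient, and thus the main point requiring care.
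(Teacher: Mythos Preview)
Your proposal is correct and follows essentially the same route as the paper: show that $\cJ$ and $\cA/\cJ$ each have the total reduction property, use Theorem~\ref{thmsimC*} to see they are similar to $C^*$-algebras, invoke Theorem~\ref{thmGifKSP} to get the SP property for both, and conclude via Theorem~\ref{thm3.08}. The only difference is that where you supply a direct argument (composing with the quotient map $\pi$) to verify that $\cA/\cJ$ inherits the total reduction property, the paper simply cites \cite[Proposition~3.3.1]{Gif1997} for this fact.
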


\begin{proof}
We note that $\cJ$ has the total reduction property by Theorem \ref{thmideal}, and so does $\cA/\cJ$ by \cite[Proposition 3.3.1]{Gif1997}.  Theorem \ref{thmsimC*} then implies that both $\cJ$ and $\cA/\cJ$ are similar to $C^*$-algebras.  By virtue of Theorem \ref{thmGifKSP}, we infer that $\cJ$ and $\cA/\cJ$ have the SP property, so that $\cA$ has the SP property in view of Theorem \ref{thm3.08}.
\end{proof}


\section{Similarity to complete isometries} \label{seccis}

In this section, we consider a different rigidity phenomenon for representations of operator algebras with the total reduction property. Recall that by Theorem \ref{thmPaulsen}, any completely bounded representation of an operator algebra is similar to a completely contractive one. A natural question then is whether a completely bounded representation with a completely bounded inverse is necessarily similar to a complete isometry. In other words, we aim to obtain a ``symmetric" version of Theorem \ref{thmPaulsen}. We will show that this can be achieved under the assumption that the algebra contains a large enough ideal of compact operators.

First, we show that if an operator algebra (not necessarily self-adjoint) admits a self-adjoint ideal, then its completely contractive representations split according to this ideal, in the same way that ${}^*$-representations do \cite[page 15]{Arv1976}. The proof is an adaptation of the standard one, but we include it for the sake of completeness.
 
 
\begin{lem}\label{lem5.02}
Let $\cA$ be an operator algebra. Assume that there is a closed two-sided ideal $\cJ$ of  $C^*(\cA)$ with the property that $\cJ\subset \cA$. Let $\theta:\cA\to B(\hilb)$ be a completely contractive representation. Then, there exists an orthogonal decomposition $\cH=\cH_1\oplus \cH_2$, a\linebreak  ${}^*$-representation $\sigma:C^*(\cA)\to B(\hilb_1)$ and a completely contractive representation\linebreak $\tau:~\cA/\cJ\to~B(\hilb_2)$ with the property that
$$
\theta(a)=\sigma(a) \oplus \tau(a+\cJ)
$$
for every $a\in \cA$.
\end{lem}

\begin{proof}
The ideal $\cJ$ is a $C^*$-algebra.
Since $\theta$ is completely contractive, we see that $\theta|_\cJ$ is a ${}^*$-representation.
Set $\hilb_1=\ol{\theta(\cJ)\hilb}$. It is clear that $\cH_1$ is invariant under $\theta(\cA)$. We claim that in fact $\cH_1$ is reducing for $\theta(\cA)$. Indeed, using that $\cJ$ is self-adjoint and that $\theta|_{\cJ}$ is a ${}^*$-representation, given $a\in \cA$ and $j\in \cJ$ we have
\begin{align*}
\theta(a)^*\theta(j)&=(\theta(j)^*\theta(a))^*=(\theta(j^*)\theta(a))^*\\
&=\theta(j^*a)^*=\theta(a^*j)
\end{align*}
and thus $\theta(a)^*\theta(\cJ)\subset \theta(\cJ)$. In particular, we see that $\theta(a)^*\cH_1\subset \cH_1$ so that  $\hilb_1$ is a reducing subspace for $\theta(\cA)$.
Hence, the map
$$
\sigma_0:\cJ\to B(\hilb_1)
$$
defined by  
\[
\sigma_0(j)=\theta(j)|_{\hilb_1}, \quad j\in \cJ
\]
is a ${}^*$-representation. Since $\cJ$ is a $C^*$-algebra, it admits a bounded approximate identity, and arguing as in the proof of Lemma~\ref{lemtrprep} we see that $\sigma_0$ is a non-degenerately acting ${}^*$-representation. Thus, it may be extended uniquely to a ${}^*$-representation  $\sigma:C^*(\cA)\to B(\hilb_1)$ (see \cite[page 14-15]{Arv1976}). Next, 
we note that for $\xi\in \hilb_1, j\in \cJ$ and $a\in \cA$ we have
\begin{align*}
\sigma(a)\sigma_0(j)\xi&=\sigma(a)\sigma(j)\xi=\sigma(aj)\xi=\sigma_0(aj)\xi\\
&=\theta(aj)\xi=\theta(a)\theta(j)\xi=\theta(a)\sigma_0(j)\xi
\end{align*}
and thus $\sigma(a)=\theta(a)|_{\hilb_1}$ since $\sigma_0$ acts non-degenerately.
Put $\hilb_2=\hilb\ominus \hilb_1$. Then
$$
\hilb_2=(\theta(\cJ)\hilb)^\perp=\cap_{j\in \cJ}\ker \theta(j^*)=\cap_{j\in \cJ}\ker \theta(j)
$$
since $\cJ$ is self-adjoint. Therefore, $\theta(\cJ)\hilb_2=\{0\}$ and we may define a completely contractive representation
$$
\tau:\cA/\cJ\to B(\hilb_2)
$$
via
$$
\tau(a+\cJ)=\theta(a)|_{\hilb_2}, \quad a\in \cA.
$$
Finally, since $\hilb_1$ is reducing for $\theta(\cA)$ we find
\begin{align*}
\theta(a)&=\theta(a)|_{\hilb_1}\oplus \theta(a)|_{\hilb_2}=\sigma(a)\oplus \tau (a+\cJ)
\end{align*}
for every $a\in \cA$.
\end{proof}


Recall that if $\cA\subset B(\cH)$ is an operator algebra, then a subset $\cX\subset B(\cH)$  is \emph{essential for $\cA$} if, for any element $a\in \cA$, the condition
\[
ax=0= xa \quad \text{for all} \quad x\in \cX
\]
implies that $a=0$. It is easily verified that if $\cX$ acts non-degenerately on $\cH$, then it must be essential for $\cA$.

The following is the main technical tool of this section, which generalizes the fact that injective ${}^*$-representations of $C^*$-algebras are completely isometric.


\begin{thm}\label{thm5.04}
Let $\cA$ be an operator algebra. Assume that there is a closed two-sided ideal $\cJ$ of  $C^*(\cA)$ with the property that $\cJ\subset \cA$. Assume also that $\cJ$ is essential for $C^*(\cA)$.  Then, any injective completely contractive representation of $\cA$ is completely isometric.

\end{thm}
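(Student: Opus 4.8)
The plan is to feed $\theta$ into Lemma~\ref{lem5.02} and then reduce everything to showing that the ${}^*$-representation piece it produces is faithful.

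Applying Lemma~\ref{lem5.02} to the injective completely contractive representation $\theta:\cA\to B(\cH)$ yields an orthogonal decomposition $\cH=\cH_1\oplus\cH_2$, a ${}^*$-representation $\sigma:C^*(\cA)\to B(\cH_1)$, and a completely contractive representation $\tau:\cA/\cJ\to B(\cH_2)$ such that $\theta(a)=\sigma(a)\oplus\tau(a+\cJ)$ for all $a\in\cA$. Amplifying and reordering coordinates, $\theta^{(n)}(A)$ is unitarily equivalent to $\sigma^{(n)}(A)\oplus\tau^{(n)}(\pi^{(n)}(A))$ for $A\in\bbM_n(\cA)$, where $\pi:\cA\to\cA/\cJ$ denotes the quotient map; in particular $\|\theta^{(n)}(A)\|\ge\|\sigma^{(n)}(A)\|$. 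Since $\cA\subset C^*(\cA)$ is a completely isometric inclusion, once we know that $\sigma$ is faithful it is completely isometric on $C^*(\cA)$, so $\|\sigma^{(n)}(A)\|=\|A\|$; combined with the complete contractivity of $\theta$ this forces $\|\theta^{(n)}(A)\|=\|A\|$ for every $n$ and every $A$, which is precisely the desired conclusion. Thus the whole argument reduces to establishing that $\sigma$ is injective.

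To see that $\sigma$ is injective I would proceed in two stages. First, $\sigma|_{\cJ}$ is injective: for $j\in\cJ$ the quotient term vanishes since $j+\cJ=0$, so $\theta(j)=\sigma(j)\oplus 0$, whence $\sigma(j)=0$ implies $\theta(j)=0$, which in turn implies $j=0$ by injectivity of $\theta$. Consequently the closed two-sided ideal $\ker\sigma$ of $C^*(\cA)$ meets $\cJ$ only in $0$. Second, I would upgrade this to $\ker\sigma=\{0\}$ using essentiality. Given $k\in\ker\sigma$ and $j\in\cJ$, the product $kj$ lies in $\cJ$ (because $\cJ$ is an ideal of $C^*(\cA)$) and also in $\ker\sigma$ (because $\ker\sigma$ is an ideal), hence $kj\in\ker\sigma\cap\cJ=\{0\}$; symmetrically $jk=0$. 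Thus $k$ annihilates $\cJ$ on both sides, and since $\cJ$ is essential for $C^*(\cA)$ we conclude $k=0$. Therefore $\sigma$ is a faithful ${}^*$-representation of the $C^*$-algebra $C^*(\cA)$, hence completely isometric, and the reduction described above completes the proof.

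The main obstacle is the second stage: faithfulness of $\sigma$ on the ideal $\cJ$ alone says nothing about its behaviour on the rest of $C^*(\cA)$, and the essentiality hypothesis is exactly the ingredient that rules out the possibility of $\sigma$ collapsing a nonzero ideal transverse to $\cJ$. Everything else is bookkeeping around the block decomposition supplied by Lemma~\ref{lem5.02}.
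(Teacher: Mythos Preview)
Your proof is correct and follows essentially the same route as the paper: apply Lemma~\ref{lem5.02}, show that the ${}^*$-representation $\sigma$ is faithful using essentiality of $\cJ$, and conclude that $\theta$ is completely isometric from the block decomposition. The only cosmetic difference is that you split the faithfulness argument into two stages ($\ker\sigma\cap\cJ=\{0\}$, then $\ker\sigma=\{0\}$), whereas the paper argues directly that $\sigma(x)=0$ forces $jx=xj=0$ for all $j\in\cJ$ via $\theta(jx)=\sigma(jx)\oplus 0=0$.
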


\begin{proof}
Let $\theta:\cA\to B(\cH)$ be an injective completely contractive representation.
By Lemma~\ref{lem5.02} there exist a subspace $\hilb_1\subset\hilb$ which is reducing for $\theta(\cA)$, a ${}^*$-representation
$$
\sigma:C^*(\cA)\to B(\hilb_1)
$$
and a completely contractive representation
$$
\tau:\cA/\cJ\to B(\hilb_1^\perp)
$$
with the property that
$$
\theta(a)=\sigma(a)\oplus \tau(a+\cJ)
$$
for every $a\in \cA$.

Now,  we claim that $\sigma$ has trivial kernel. Indeed, assume that $\sigma(x)=0$ for some $x\in C^*(\cA)$. Then, for every $j\in \cJ$ we have $jx\in \cJ\subset \cA$, $xj\in \cJ\subset \cA$ and
$$
\theta(jx)=\sigma(jx)\oplus 0=0, \quad \theta(xj)=\sigma(xj)\oplus 0=0
$$
whence $jx=0=xj$, since $\theta$ is assumed to be injective. By assumption, $\cJ$ is essential for $C^*(\cA)$ so that $x=0$. We conclude that $\sigma$ is an injective ${}^*$-representation,  and thus it is completely isometric. Since $\tau$ is completely contractive, we conclude that $\theta$ must be completely isometric as well.
\end{proof}


It is now a simple matter to adapt the previous result so that it applies more generally to injective representations that are merely completely bounded.


\begin{cor}\label{cor5.05}
Let $\cA$ be an operator algebra. Assume that there is a closed two-sided ideal $\cJ$ of  $C^*(\cA)$ with the property that $\cJ\subset \cA$. Assume also that $\cJ$ is essential for $C^*(\cA)$. Let $\theta:\cA\to B(\hilb)$ be an injective completely bounded representation. Then, there exists an invertible operator $Y\in B(\hilb)$ such that the map
$$
a\mapsto Y\theta(a)Y^{-1}
$$
is completely isometric.
\end{cor}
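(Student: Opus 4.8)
The plan is to deduce this corollary directly from the two ingredients already in hand: Paulsen's similarity theorem (Theorem~\ref{thmPaulsen}), which converts complete boundedness into complete contractivity after a similarity, and Theorem~\ref{thm5.04}, which upgrades injective completely contractive representations to completely isometric ones under exactly the standing hypotheses on $\cJ$. So the strategy is to first normalize $\theta$ to be completely contractive by a similarity, observe that this normalization preserves injectivity, and then invoke Theorem~\ref{thm5.04}.

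Concretely, I would begin by applying Theorem~\ref{thmPaulsen} to the completely bounded homomorphism $\theta:\cA\to B(\hilb)$. This yields an invertible operator $Y\in B(\hilb)$ such that the map
\[
\psi:\cA\to B(\hilb), \qquad \psi(a)=Y\theta(a)Y^{-1}, \quad a\in\cA,
\]
is a completely contractive representation. This is precisely the $Y$ that will appear in the conclusion, so no further choice of similarity is needed.

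Next I would verify that $\psi$ is injective. Since $Y$ is invertible, for any $a\in\cA$ the condition $\psi(a)=0$ forces $Y\theta(a)Y^{-1}=0$, hence $\theta(a)=0$, and then $a=0$ by the injectivity of $\theta$. Thus $\psi$ is an injective completely contractive representation of $\cA$. At this point the hypotheses of Theorem~\ref{thm5.04} are met verbatim -- $\cJ$ is a closed two-sided ideal of $C^*(\cA)$ contained in $\cA$ and essential for $C^*(\cA)$ -- so Theorem~\ref{thm5.04} applies to $\psi$ and shows that $\psi$ is completely isometric. Since $\psi(a)=Y\theta(a)Y^{-1}$, this is exactly the assertion of the corollary.

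I do not anticipate a genuine obstacle here, as the corollary is essentially a bookkeeping combination of the two prior results; the only point requiring a word of justification is that conjugation by the invertible operator $Y$ preserves injectivity, which is immediate. All the substantive work (the reducing-subspace splitting and the essentiality argument showing that the $C^*$-part of the representation has trivial kernel) has already been carried out in Lemma~\ref{lem5.02} and Theorem~\ref{thm5.04}, so this step merely transports those conclusions across the similarity supplied by Theorem~\ref{thmPaulsen}.
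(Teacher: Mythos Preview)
Your proposal is correct and follows exactly the paper's approach: the paper's proof is the single line ``Combine Theorem~\ref{thmPaulsen} with Theorem~\ref{thm5.04},'' and you have simply unpacked this combination, including the (immediate) observation that the similarity preserves injectivity.
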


\begin{proof}
Combine Theorem \ref{thmPaulsen} with Theorem \ref{thm5.04}.
\end{proof}

We remark that if $\cA$ is an operator algebra and $\cJ\subset \cA$ is a closed two-sided ideal of $\cA$ which happens to be self-adjoint, then in fact $\cJ$ is an ideal of $C^*(\cA)$. Indeed, given $a\in \cA$ and $j\in \cJ$ we see that $j^*\in \cJ$, whence $j^* a\in \cJ$ and
\[
a^* j=(j^* a)^*\in \cJ.
\]
Likewise, we find $ja^*\in \cJ$. We can now give an application of Corollary \ref{cor5.05} to operator algebras with the total reduction property.


\begin{cor} \label{cor5.06}
Let $\cA \subset B(\cH)$ be an operator algebra with the total reduction property, and let $\cJ\subset \cA$ be a  non-degenerately acting, closed, two-sided ideal of $\cA$.   Assume that $\cJ$ consists of compact operators. Let $\theta: \cA \to B(\hilb_\theta)$ be an injective completely bounded representation.  
Then, there exist invertible operators $X \in B(\hilb)$ and $Y \in B(\hilb_\theta)$ such that the map
\[
X^{-1} a X \mapsto Y^{-1} \theta(a) Y, \ \ \ a \in \cA \]
is completely isometric.
\end{cor}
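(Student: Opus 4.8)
The plan is to reduce the statement to Corollary~\ref{cor5.05} by first conjugating $\cA$ so that the ideal $\cJ$ becomes self-adjoint, and then feeding the resulting configuration into that corollary. First I would observe that $\cJ$ has the total reduction property by Theorem~\ref{thmideal}, and since by hypothesis it consists of compact operators, Theorem~\ref{thmsimC*}(1) supplies an invertible operator $X\in B(\cH)$ for which $X^{-1}\cJ X$ is a $C^*$-algebra; in particular $\cJ':=X^{-1}\cJ X$ is a self-adjoint, norm-closed subalgebra of $B(\cH)$.

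Next, setting $\cA':=X^{-1}\cA X$, I would check that $\cJ'$ is a closed two-sided ideal of $\cA'$, which is immediate since conjugation by the invertible operator $X$ is an algebra isomorphism carrying $\cJ$ onto $\cJ'$ and $\cA$ onto $\cA'$. Being self-adjoint, $\cJ'$ is then an ideal of $C^*(\cA')$ contained in $\cA'$, by the remark recorded just before the present statement. Moreover, because $X$ is invertible and $\cJ$ acts non-degenerately on $\cH$, the ideal $\cJ'$ also acts non-degenerately on $\cH$; by the observation that a non-degenerately acting set is essential, $\cJ'$ is essential for $C^*(\cA')$. Thus the pair $(\cA',\cJ')$ satisfies the hypotheses of Corollary~\ref{cor5.05}.

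It then remains to transport $\theta$ to $\cA'$. I would define the representation $\theta':\cA'\to B(\hilb_\theta)$ by $\theta'(b)=\theta(XbX^{-1})$ for $b\in \cA'$; as the composition of the similarity $b\mapsto XbX^{-1}$ with $\theta$, it is injective (both factors are) and completely bounded (the similarity is completely bounded and so is $\theta$). Corollary~\ref{cor5.05} then yields an invertible $W\in B(\hilb_\theta)$ such that $b\mapsto W\theta'(b)W^{-1}$ is completely isometric on $\cA'$. Writing a typical element of $\cA'$ as $b=X^{-1}aX$ with $a\in \cA$, this map sends $X^{-1}aX$ to $W\theta(a)W^{-1}$; setting $Y:=W^{-1}$ gives precisely that $X^{-1}aX\mapsto Y^{-1}\theta(a)Y$ is completely isometric, as desired.

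The genuine content of the argument is concentrated in the first reduction: replacing $\cJ$ by a self-adjoint ideal via Theorem~\ref{thmsimC*} is what unlocks the $C^*$-algebraic splitting behind Theorem~\ref{thm5.04} and Corollary~\ref{cor5.05}. The remaining verifications---that conjugation preserves the ideal relation, non-degeneracy, essentiality, injectivity and complete boundedness---are routine consequences of the invertibility of $X$, and the only point demanding care is the bookkeeping of the two similarities $X$ and $W$ together with the final substitution $Y:=W^{-1}$ needed to match the stated normalization.
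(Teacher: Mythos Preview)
Your proof is correct and follows essentially the same route as the paper: conjugate by an invertible $X$ so that the compact ideal becomes self-adjoint via Theorem~\ref{thmsimC*}, observe that the resulting self-adjoint ideal is a non-degenerately acting (hence essential) ideal of $C^*(\cA')$, transport $\theta$ to $\cA'$, and apply Corollary~\ref{cor5.05}. The only cosmetic difference is your explicit relabeling $Y:=W^{-1}$, which the paper absorbs directly into the invocation of Corollary~\ref{cor5.05}.
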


\begin{proof}
By Theorem~\ref{thmideal}, we see that $\cJ$ has the total reduction property. Thus, by Theorem~\ref{thmsimC*} there is an invertible operator $X\in B(\cH)$ such that $X^{-1}\cJ X$ is a non-degenerately acting $C^*$-algebra. Set $\cB = X^{-1} \cA X$, and define 
\[
\rho: \cB \to B(\hilb_\theta)
\]
as
\[
\rho (b) = \theta(XbX^{-1}), \quad b\in \cB.
\]
It is obvious that $\rho$ is an injective completely bounded representation of $\cB$. Since $X^{-1}\cJ X$ is a self-adjoint ideal in $\cB$, we conclude that $X^{-1} \cJ X$ is a closed two-sided ideal of $C^*(\cB)$, as noted above.  It is essential  for $C^*(\cB)$ because it acts non-degenerately. We may therefore invoke Corollary~\ref{cor5.05} to deduce the existence of an invertible operator $Y \in B(\hilb_\theta)$ such that the map
\[
b \mapsto Y^{-1} \rho(b) Y, \quad b\in \cB
\]
is completely isometric.  Equivalently, we see that
\[
X^{-1} a X \mapsto Y^{-1} \theta(a) Y , \quad a\in \cA\]
is completely isometric, and the proof is complete.
\end{proof}


We end this section by strengthening the previous corollary, using the well-known fact that unital completely isometric linear isomorphisms between operator algebras lift to ${}^*$-isomorphisms of the so-called \emph{$C^*$-envelopes}. We will not be needing the precise definition of this object here, but the interested reader can consult \cite{Arv1972} and \cite{Pau2002} for details.   


\begin{cor}\label{corC*env}
Let $\cA \subset B(\cH)$ be a unital operator algebra with the total reduction property, and suppose that $\cA$ contains all the compact operators on $\cH$.   Let $\theta: \cA \to B(\hilb_\theta)$ be a unital, injective, completely bounded representation.  
Then, there exist an invertible operator  $Y \in B(\hilb_\theta)$ and a unital, injective ${}^*$-representation  $\sigma: C^*(\cA)\to B(\cH_\theta)$ with the property that
\[
\sigma(a) = Y^{-1}\theta(a) Y, \quad a\in \cA.
\]
\end{cor}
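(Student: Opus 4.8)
The plan is to reduce Corollary \ref{corC*env} to the already-established Corollary \ref{cor5.06} and then upgrade the conclusion using the rigidity of the $C^*$-envelope. First I would verify that the hypotheses of Corollary \ref{cor5.06} are met: since $\cA$ is unital with the total reduction property and contains $\cK(\cH)$, the ideal $\cJ = \cK(\cH)$ is a closed two-sided ideal of $\cA$ consisting of compact operators, and it acts non-degenerately on $\cH$ (the compacts always do). Applying Corollary \ref{cor5.06} yields invertible operators $X \in B(\cH)$ and $Y \in B(\cH_\theta)$ such that the map sending $X^{-1}aX$ to $Y^{-1}\theta(a)Y$ is completely isometric. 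However, because $\cJ = \cK(\cH)$ is already self-adjoint and non-degenerate, I expect that the similarity $X$ is unnecessary here; more precisely, in the proof of Corollary \ref{cor5.06} one chooses $X$ only to arrange that $X^{-1}\cJ X$ is a genuine $C^*$-algebra, but $\cK(\cH)$ is already a $C^*$-algebra, so one may simply take $X = I$ and apply Corollary \ref{cor5.05} directly to $\cA$ with $\cJ = \cK(\cH)$.

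Carrying this out, I would invoke Corollary \ref{cor5.05} with $\cJ = \cK(\cH)$: this is an essential self-adjoint ideal of $C^*(\cA)$ (essential since it acts non-degenerately, and an ideal of $C^*(\cA)$ by the self-adjointness remark preceding Corollary \ref{cor5.06}), so there is an invertible $Y \in B(\cH_\theta)$ making
\[
\psi: a \mapsto Y^{-1}\theta(a)Y, \quad a\in \cA
\]
completely isometric. Since $\theta$ is unital, so is $\psi$, hence $\psi$ is a unital complete isometry of the unital operator algebra $\cA$ onto its image.

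The key remaining step is to promote this unital complete isometry to a $*$-isomorphism on the generated $C^*$-algebras. This is exactly Arveson's implementation theorem \cite{Arv1972}: a unital completely isometric isomorphism between unital operator algebras extends uniquely to a $*$-isomorphism of their $C^*$-envelopes. I would argue that because $\cA$ contains $\cK(\cH)$ and acts (via the identity representation) irreducibly-enough that $C^*(\cA) \supseteq \cK(\cH)$ with $\cK(\cH)$ essential, the $C^*$-envelope of $\cA$ is identified with $C^*(\cA)$ itself (an essential ideal forces the boundary representation / Shilov ideal to be trivial, so the identity representation is already the maximal $C^*$-cover in the relevant sense). Consequently the unital complete isometry $\psi$ lifts to a unital $*$-isomorphism defined on $C^*(\cA)$. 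Composing with the similarity by $Y$, I obtain a unital, injective $*$-representation $\sigma: C^*(\cA) \to B(\cH_\theta)$ that restricts to $\psi$ on $\cA$, i.e. $\sigma(a) = Y^{-1}\theta(a)Y$ for all $a \in \cA$, as required.

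The main obstacle I anticipate is the precise identification of the $C^*$-envelope of $\cA$ with $C^*(\cA)$ and the verification that the $*$-homomorphic extension furnished by Arveson's theorem is genuinely defined on all of $C^*(\cA)$ (rather than merely on an abstract envelope) and is injective. The injectivity of $\sigma$ should follow from the fact that $\sigma$ restricted to the essential ideal $\cK(\cH)$ is an injective $*$-representation of a $C^*$-algebra and hence isometric there, and that any $C^*$-algebra with an essential ideal on which a $*$-homomorphism is injective must itself be faithfully represented by that homomorphism. Handling the identification of the envelope cleanly, possibly by citing the standard fact that an operator algebra containing an essential ideal of the compacts has $C^*$-envelope equal to the concretely generated $C^*$-algebra, is where I would need to be most careful.
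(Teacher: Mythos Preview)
Your proposal is correct and follows essentially the same route as the paper's proof: apply Corollary~\ref{cor5.05} directly (using that $\cK(\cH)$ is already a self-adjoint, non-degenerately acting, hence essential ideal of $C^*(\cA)$) to obtain the unital complete isometry $a\mapsto Y^{-1}\theta(a)Y$, and then invoke Arveson's rigidity results to extend to a ${}^*$-representation of $C^*(\cA)$. The paper dispatches the final step by citing specific theorems from \cite{Arv1969,Arv1972} rather than arguing explicitly, as you do, that the essential ideal $\cK(\cH)$ forces $C^*_e(\cA)=C^*(\cA)$; but this is precisely the content of those citations, so your concern about ``handling the identification of the envelope cleanly'' is resolved by the same references.
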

\begin{proof}
The self-adjoint ideal $\cK(\cH)$ of compact operators on $\cH$ acts non-degenerately, so it is essential for $C^*(\cA)$. By Corollary \ref{cor5.05}, there exists an invertible operator  $Y \in B(\hilb_\theta)$ such that the map
\[
 a  \mapsto Y^{-1} \theta(a) Y, \ \ \ a \in \cA \]
is unital and completely isometric.   The existence of the ${}^*$-representation $\sigma$ then follows from \cite[Proposition 2.1.0, Theorem 2.1.1 and Theorem 0.3]{Arv1972} and\cite[Theorem 2.1.2]{Arv1969}.
\end{proof}



\section{The construction of Choi, Farah and Ozawa}\label{secCFO}

In this final section, we apply the main results of the previous sections to the example of Choi, Farah and Ozawa \cite{CFO2014}. We first recall the basic features of their construction.

Let $\cH=\bigoplus_{n\in \bbN} \bbC^2$ and define the following $C^*$-algebra
\[
\cL=\{ (a_n)_{n\in \bbN}: a_n\in \bbM_2 \text{ for every } n\in \bbN \text{ and } \sup_n\|a_n\|<\infty \}
\]
where $\bbM_2$ denotes the collection of $2\times 2$ complex matrices. Then, we see that $\cL\subset B(\cH)$. Next, we define 
\[
\cJ=\{ (a_n)_{n\in \bbN}\in \cL: \lim_{n\to \infty} \|a_n\|=0\},
\]
which is easily seen to be a closed two-sided ideal of $\cL$. Moreover, it is clear that $\cJ$ consists of compact operators on $\cH$, and that $\cJ$ acts non-degenerately. 

Let $\pi:\cL\to \cL/\cJ$ be the quotient map. In \cite{CFO2014}, it is shown that for a clever choice of abelian group $\Gamma$ and of uniformly bounded representation $\rho:\Gamma\to \pi(\cL)$, the operator algebra
\[
\cA_\Gamma=\pi^{-1}\left(\ol{ \text{span } \rho(\Gamma)}\right)
\]
is amenable, yet it is not similar to a $C^*$-algebra, thus giving a counterexample to a longstanding conjecture. For our purposes, the only properties of $\cA_\Gamma$ that will be needed are that it contains $\cJ$ and that
\[
\cA_\Gamma/\cJ\cong \ol{ \text{span } \rho(\Gamma)}
\]
is abelian, since $\Gamma$ is abelian.


\begin{thm}\label{thmCFOSP}
The algebra $\cA_\Gamma$ has the  SP property.
\end{thm}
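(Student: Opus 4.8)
The plan is to recognize this statement as a direct application of Corollary \ref{cor3.06}, so that the entire argument reduces to checking that the Choi--Farah--Ozawa algebra $\cA_\Gamma$ satisfies the three hypotheses of that corollary: it has the total reduction property, it contains a closed two-sided ideal consisting of compact operators, and the quotient by that ideal is abelian. All three are either recorded in the setup of this section or follow immediately from facts assembled in Section \ref{secPr}.

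First I would establish the total reduction property. Since Choi, Farah and Ozawa proved in \cite{CFO2014} that $\cA_\Gamma$ is amenable, I can invoke the fact noted in the preliminaries---namely that every amenable operator algebra has the total reduction property \cite[Proposition 2.3.2]{Gif1997}---to conclude that $\cA_\Gamma$ enjoys it. This is the single point at which amenability of the construction enters the proof. Next I would dispatch the two ideal-theoretic hypotheses, both of which are built into the description of $\cA_\Gamma = \pi^{-1}(\ol{\text{span }\rho(\Gamma)})$: the ideal $\cJ = \ker\pi$ is a closed two-sided ideal contained in $\cA_\Gamma$, it consists of compact operators on $\cH$, and the quotient $\cA_\Gamma/\cJ \cong \ol{\text{span }\rho(\Gamma)}$ is abelian precisely because $\Gamma$ is an abelian group.

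With every hypothesis verified, Corollary \ref{cor3.06} applies verbatim and delivers the SP property for $\cA_\Gamma$. I do not expect any genuine obstacle here: the substantive content lies entirely in the earlier machinery---Theorem \ref{thm3.08} reducing the SP property to that of the ideal and the quotient, together with the similarity theorems \ref{thmsimC*} and \ref{thmGifKSP} that furnish the SP property for compact and for abelian algebras with the total reduction property---and the present theorem merely assembles those pieces for the concrete example. The only thing to be careful about is that the abelianness of the quotient, rather than of the whole algebra, is what is being exploited, since $\cA_\Gamma$ itself is of course not abelian.
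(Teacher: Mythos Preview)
Your proposal is correct and matches the paper's own proof exactly: observe that amenability of $\cA_\Gamma$ yields the total reduction property, then invoke Corollary~\ref{cor3.06} using the compact ideal $\cJ$ and the abelian quotient $\cA_\Gamma/\cJ$. The paper's proof is simply a two-line version of what you wrote.
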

\begin{proof}
Since $\cA_\Gamma$ is amenable, it must have the total reduction property. The conclusion then follows directly from Corollary \ref{cor3.06}.
\end{proof}


Combining this theorem with the results from Section \ref{seccis}, we obtain the following.


\begin{thm}\label{thmCFOcis}
Let $\theta: \cA_\Gamma \to B(\cH)$ be an injective representation. Then, there exists an invertible operator $Y \in B(\hilb)$ such that the map 
\[
a \mapsto Y^{-1} \theta(a) Y , \quad a\in \cA_\Gamma\]
is completely isometric.
\end{thm}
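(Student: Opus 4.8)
The plan is to reduce the statement to Corollary~\ref{cor5.05}, taking advantage of the fact that the ideal $\cJ$ appearing in the Choi--Farah--Ozawa construction is already self-adjoint. The first thing to notice is that, although $\theta$ is only assumed to be a representation, the SP property of $\cA_\Gamma$ established in Theorem~\ref{thmCFOSP} forces $\theta$ to be completely bounded. Hence $\theta$ is an injective \emph{completely bounded} representation, which is exactly the class of maps handled in Section~\ref{seccis}.

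Next I would check that the hypotheses of Corollary~\ref{cor5.05} hold for $\cA=\cA_\Gamma$ and the ideal $\cJ$ of the construction. The key observation is that $\cJ$, being the $c_0$-direct sum of copies of $\bbM_2$, is self-adjoint and is therefore a $C^*$-algebra. By the remark preceding Corollary~\ref{cor5.06}, a self-adjoint closed two-sided ideal of $\cA_\Gamma$ is automatically a closed two-sided ideal of $C^*(\cA_\Gamma)$; thus $\cJ$ is such an ideal and satisfies $\cJ\subset\cA_\Gamma$. Since $\cJ$ acts non-degenerately on $\cH$, it is moreover essential for $C^*(\cA_\Gamma)$. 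With these points verified, Corollary~\ref{cor5.05} applies directly and produces an invertible $Y\in B(\cH)$ for which $a\mapsto Y\theta(a)Y^{-1}$ is completely isometric; replacing $Y$ by $Y^{-1}$ then yields the form stated in the theorem.

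I do not expect a genuine obstacle here, as the argument is essentially an application of the Section~\ref{seccis} machinery. The one point worth emphasising is why the conclusion is cleaner than the generic Corollary~\ref{cor5.06}, where one also conjugates the source algebra by an operator $X$: in the general total-reduction setting one must first use Theorem~\ref{thmsimC*} to conjugate the ideal into self-adjoint form, and it is precisely this preliminary similarity that appears as $X$ on the domain side. Because $\cJ$ is already a $C^*$-algebra in the present example, no such conjugation is needed, so one may invoke Corollary~\ref{cor5.05} rather than Corollary~\ref{cor5.06} and dispense with $X$ altogether. The only care required is to confirm that $\cJ$ is genuinely self-adjoint, which is immediate from its description, and that it acts non-degenerately, which is recorded in the setup of the construction.
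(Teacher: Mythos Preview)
Your proposal is correct and follows essentially the same route as the paper: invoke Theorem~\ref{thmCFOSP} to upgrade $\theta$ to a completely bounded representation, verify that $\cJ$ is a self-adjoint essential ideal of $C^*(\cA_\Gamma)$, and then apply Corollary~\ref{cor5.05}. The paper's proof is terser (it simply notes that $\cJ$ is an essential ideal of the ambient $C^*$-algebra $\cL$, which a fortiori makes it one of $C^*(\cA_\Gamma)\subset\cL$), but your justification via self-adjointness and non-degeneracy is equally valid and arguably more self-contained.
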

\begin{proof}
By Theorem \ref{thmCFOSP}, we see that $\theta$ is completely bounded. Since $\cJ\subset \cA_\Gamma$ is a closed two-sided essential ideal of $\cL$, the conclusion follows from Corollary \ref{cor5.05}.
\end{proof}


We finish by pointing out that in his thesis~\cite{Gif1997}, Gifford conjectured that every weak${}^*$-closed operator algebra with the total reduction property must be similar to a $C^*$-algebra.  More precisely, he conjectured this for weak${}^*$-closed operator algebras with a weaker property known as the \emph{complete reduction property}.
The algebra $\cA_\Gamma$ above supports Gifford's intuition: although $\cA_\Gamma$ is not similar to a $C^*$-algebra despite being amenable, its weak-${}^*$ closure coincides with the $C^*$-algebra $\cL$. 

In general, it is not known whether the weak${}^*$-closure of an operator algebra with the total reduction property also enjoys the total reduction property (although it must have the weaker complete reduction property \cite{Gif1997}).  In the case at hand however, this can be seen directly since $\cL$ can be realized as the tensor product $\ell_\infty(\bbN) \otimes \bbM_2$. Now, $\ell_\infty(\bbN)$ is an abelian $C^*$-algebra, hence it is nuclear and thus so is $\cL$. It follows that $\cL$ is amenable \cite{Haa1983} and therefore it has the total reduction property.

We refer the reader to the recent preprint~\cite{CM2016rfd} for a closer examination of the similarity to $C^*$-algebras of subalgebras of products of matrix algebras that have the total reduction property.

\bibliographystyle{plain}
\bibliography{2015_11papers}

\end{document}